\newtheorem{theorem}{Theorem}
\newtheorem{lemma}{Lemma}
\DeclarePairedDelimiterXPP\ind[1]{\mathds{1}}{\lbrace}{\rbrace}{}{#1}
\DeclarePairedDelimiterX\eval[1]{\lbrace}{\rvert}{#1 \delimsize\rbrace}
\DeclarePairedDelimiter\abs{\lvert}{\rvert}
\DeclarePairedDelimiter\card{\lvert}{\rvert}
\DeclarePairedDelimiter\norm{\lVert}{\rVert}
\DeclarePairedDelimiter\ceil{\lceil}{\rceil}
\DeclarePairedDelimiter\floor{\lfloor}{\rfloor}
\DeclarePairedDelimiter\del{\lparen}{\rparen}
\DeclarePairedDelimiter\sbr{\lbrack}{\rbrack}
\DeclarePairedDelimiter\cbr{\lbrace}{\rbrace}
\DeclarePairedDelimiter\set{\lbrace}{\rbrace}
\DeclarePairedDelimiter\intco{\lbrack}{\rparen}
\DeclarePairedDelimiter\intoo{\lparen}{\rparen}
\DeclareMathOperator\E{\mathbb{E}}
\DeclareMathOperator\Normal{N}
\DeclareMathOperator\sign{sign}
\DeclareMathOperator\Bernoulli{Bern}
\DeclareMathOperator\KL{KL}
\DeclareMathOperator\err{err}
\DeclareMathOperator\emperr{\widehat{err}}
\DeclareMathOperator\dif{d\!}
\DeclareMathOperator*\argmin{\arg\min}
\DeclareMathOperator*\argmax{\arg\max}
\newcommand\T{{\scriptscriptstyle{\mathsf{T}}}}
\newcommand\R{\mathbb{R}}
\def\ddefloop#1{\ifx\ddefloop#1\else\ddef{#1}\expandafter\ddefloop\fi}
\def\ddef#1{\expandafter\def\csname bf#1\endcsname{\ensuremath{\mathbf{#1}}}}
\def\ddef#1{\expandafter\def\csname bf#1\endcsname{\ensuremath{\boldsymbol{\csname #1\endcsname}}}}
\def\ddef#1{\expandafter\def\csname cal#1\endcsname{\ensuremath{\mathcal{#1}}}}
\newcommand\fano{\delta}
\newcommand\klbound{\kappa}
\newcommand\Params{\Theta}
\newcommand\param{\theta}
\newcommand\bfparam{\bftheta}
\newcommand\truth{\param^\star}
\newcommand\erm{{\hat{\param}_{\operatorname{ERM}}}}
\newcommand\linest{{\hat{\param}_{\operatorname{linear}}}}
\newcommand\reluest{{\hat{\param}_{\operatorname{relu}}}}
\newcommand\estimate{{\hat{\param}}}
\newcommand\invtemp{\beta}
\DeclareMathOperator{\opt}{opt}
\newcommand\sphere{\ensuremath{S^{d-1}}}
\newcommand\prior{\Pi}
\title{On the sample complexity of parameter estimation in logistic regression with normal design}
\author{%
  Daniel Hsu \\ {\sl Columbia University} \and%
  Arya Mazumdar \\ {\sl University of California, San Diego}%
}
\begin{document}

\maketitle

\begin{abstract}
  The logistic regression model is one of the most popular data generation model in noisy binary classification problems.
In this work, we study the sample complexity of estimating the parameters of the logistic regression model up to a given $\ell_2$ error, in terms of the dimension and the inverse temperature, with standard normal covariates.
The inverse temperature controls the signal-to-noise ratio of the data generation process.
While both generalization bounds and asymptotic performance of the maximum-likelihood estimator for logistic regression are well-studied, the non-asymptotic sample complexity that shows the dependence on error and the inverse temperature for parameter estimation is absent from previous analyses.
We show that the sample complexity curve has two change-points in terms of the inverse temperature, clearly separating the low, moderate, and high temperature regimes.

\end{abstract}

\section{Introduction}

This paper studies the sample complexity of estimating the parameter vector in the logistic regression model under a normal design, with particular attention paid to the dependence on the \emph{dimension} $d$, the \emph{inverse temperature} $\invtemp\geq0$, and the \emph{target error} $\epsilon \in \intoo{0,1}$.
We show how the form of the sample complexity changes depending on the particular relationship between the inverse temperature and the target error.

Our statistical model is as follows.
The parameter space is the unit sphere $\sphere = \set{ \param \in \R^d : \norm{\param} = 1 }$, where $\norm{\cdot}$ denotes the $\ell_2$ norm on $\R^d$.
The covariate vector $\bfx$ is a $d$-dimensional standard normal random vector.
Conditional on $\bfx$, the response $\bfy$ is a binary $\{-1,1\}$-valued (Bernoulli) random variable.
If the parameter vector in our model is $\param \in \sphere$, then  $\bfy =1$ with probability $g'(\invtemp\bfx^\T\param)$, where $g'$ is the standard logistic function $g'(\eta) = 1/(1+e^{-\eta})$, which is the derivative of the log partition function $g(\eta) = \ln(1+e^\eta)$.
The inverse temperature $\invtemp$, which is the norm of the coefficient vector on $\bfx$ appearing in the mean parameter, is regarded as a model hyperparameter, and it governs the signal-to-noise ratio of this data generation process. In particular, when $\invtemp=0$, the response is pure noise---a fair coin flip---with no dependence on the covariates.
When $\invtemp = +\infty$, the response is fully determined by a homogeneous linear classifier, simply denoting the side of the hyperplane $\set{ x \in \R^d : x^\T\param = 0 }$ that $\bfx$ lies on.

We assume the observed data $(\bfx_1,\bfy_1),\dotsc,(\bfx_n,\bfy_n)$ are independent copies of $(\bfx,\bfy)$, with distribution determined by the unknown parameter vector $\truth \in \sphere$ (and inverse temperature $\invtemp$).
For a given $\epsilon \in \intoo{0,1}$, the goal is to find an estimate $\estimate = \estimate((\bfx_i,\bfy_i)_{i=1}^n) \in \sphere$ based on these data (and possibly also $\invtemp$) such that $\estimate$ satisfies
\begin{equation*}
  \norm{\estimate - \truth} \leq \epsilon
\end{equation*}
(either in expectation or with high probability over the realization of the data).

The \emph{sample complexity} $n^\star(d,\invtemp,\epsilon)$ is the smallest sample size $n$ such that the above task is achievable by some estimator. 
Intuitively, the estimation error for the maximum likelihood estimator here is asymptotically normal; leading one to believe that the sample complexity must scale as $d/\epsilon^2$.
However, as $\invtemp$ increases to $+\infty$, the estimation problem gradually becomes a noiseless halfspace learning problem.
The sample complexity for the latter problem is known to be $d/\epsilon$.
This means that the inverse temperature $\invtemp$ should play a crucial role in the sample complexity; going from small $\invtemp$ to large $\invtemp$, the sample complexity curve must have one or more change-points.
In this paper, we coarsely determine this dependence on $\invtemp$.
In particular, we show that, up to logarithmic factors in $d$ and $1/\epsilon$ in the expressions below, the sample complexity satisfies
\begin{equation*}
  n^\star(d,\invtemp,\epsilon)
  \asymp
  \begin{cases}
    \displaystyle
    \frac{d}{\invtemp^2\epsilon^2} & \text{if $\invtemp \lesssim 1$ (high temperatures)} ;
    \vspace{1mm}
    \\
    \displaystyle
    \frac{d}{\invtemp\epsilon^2} & \text{if $1 \lesssim \invtemp \lesssim 1/\epsilon$ (moderate temperatures)} ;
    \vspace{1mm}
    \\
    \displaystyle
    \frac{d}{\epsilon} & \text{if $\invtemp \gtrsim 1/\epsilon$ (low temperatures)} .
  \end{cases}
\end{equation*}

\subsection{Motivation}
\label{sec:motivation}

Our original motivation for studying this problem comes from the application to noisy one-bit (compressive) sensing, in which only a single bit is retained per linear measurement of a signal~\citep{boufounos20081}.
In that context, \citet{plan2012robust} proposed a robust linear estimator that is well-behaved under a variety of observation models, including the logistic regression model that we consider.
This estimator (which in the present context is essentially the same as the ``Average'' algorithm of \citet{servedio1999pac}) was shown by \citet{plan2017high} to have sample complexity
\begin{equation*}
  O\del*{ \frac{d}{\min\set{\invtemp^2,1}\epsilon^2} } ,
\end{equation*}
improving on an earlier analysis of \citet{plan2012robust} that had a worse dependence on $\epsilon$; note that here we do not assume that $\truth$ is sparse.
The optimality of this estimator in the high-temperature regime ($\invtemp \lesssim 1$) is readily established as a standard application of Fano's inequality \citep[see, e.g.,][Appendix C.1]{chen2016bayes}.
However, it was unclear whether this sample complexity is optimal in other regimes.
In particular, in the zero-temperature regime (i.e., $\invtemp = +\infty$), the $\bfy_i$ are determined by the sign of $\bfx_i^\T\truth$, so the problem becomes equivalent to that of PAC learning homogeneous linear classifiers under spherically symmetric distributions on $\bfx$.
For that problem, the sample complexity is $\Theta(d/\epsilon)$, as established by \citet{long1995sample,long2003upper}; note that the dependence on $\epsilon$ is considerably reduced.\footnote{%
  \citet{servedio1999pac} studied the ``Average'' algorithm in this context, allowing for the possibility that each observed label is independently flipped with some fixed probability $\eta \in \intco{0,1/2}$, and obtains a sample complexity upper bound of $O(d/((1-2\eta)\epsilon)^2)$.%
}
\citet{jacques2013robust} also gives a statement of the lower bound in the context of one-bit compressive sensing.

Lower bounds on the sample complexity in our setting do not directly follow from standard lower bounds from statistical learning theory for homogeneous linear classifiers~\citep[e.g.,][]{devroye1995lower}, as the distributions exhibiting the lower bounds generally do not conform to our statistical model.
In particular, the support of $\bfx$ in these lower bounds is typically taken to be a shattered finite set of points, and the conditional distribution of $\bfy$ given $\bfx$ may not be of the form in logistic regression.
The exceptions, as mentioned above, are those based on Fano's inequality for the high temperature regime, and the lower bound in the zero temperature regime of~\citet{long1995sample}.

Our upper and lower bounds resolve the dependence of the sample complexity on the inverse temperature (up to logarithmic factors in $1/\epsilon$ and $d$), particularly in the regime where $1 \lesssim \invtemp < +\infty$.

\subsection{Techniques}
\label{sec:techniques}

\paragraph{Lower bounds.}

Our lower bounds for the high and moderate temperature regimes (\Cref{thm:moderatetemp}) are proved using Fano's inequality; for convenience, we use a Bayesian version due to \citet{zhang2006information}, although the ``Generalized Fano'' approach of \citet{han1994generalizing} would also work.
The bound in the high temperature regime is a ``textbook'' application that uses a uniform quadratic bound on the Kullback–Leibler (KL) divergence between the distributions determined by nearby parameter vectors.
However, the moderate temperature regime requires a more refined analysis that does not appear to be standard.
To facilitate the required computation, we use the Bregman divergence form of the KL divergence between Bernoulli distributions.

For the low temperature regime, this version of Fano's inequality cannot be used, as the aforementioned KL divergence becomes unbounded.
The basic form of Fano's inequality is applicable, as the conditional entropy of the (randomly chosen) parameter given the data can be estimated.
However, we were not able to obtain the optimal lower bound this way in this low temperature regime.
Instead, we replicate the combinatorial argument of \citet{long1995sample} with modifications to handle finite $\invtemp$ (\Cref{thm:lowtemp}).

\paragraph{Upper bounds.}

To establish upper bounds on the sample complexity, it is natural to consider the maximum likelihood estimator (MLE), which is equivalent to finding $\param \in \sphere$ that minimizes the empirical risk with respect to the logistic loss: $(1/n) \sum_{i=1}^n  g(-\invtemp \bfy_i\bfx_i^\T\param)$.
The convexity of the logistic loss potentially makes the MLE computationally tractable (perhaps after extending the parameter space to the ball).
The risk of a given $\param$ is the expected value of this empirical risk, and the excess risk relative to that of $\truth$ is the KL divergence between the distributions determined by $\param$ and $\truth$.
This KL divergence can, in turn, be related to the parameter error $\norm{\param - \truth}$ using our analysis from the lower bound.
However, bounding the excess risk sharply enough appears to be challenging.
A standard approach in statistical learning theory is to use techniques like Rademacher averages from the theory of empirical processes to relate excess risks to the excess empirical risks.
Unfortunately, using such tools designed for smooth loss functions like the logistic loss~\citep[e.g.,][]{srebro2010smoothness} leads to a sample complexity with suboptimal dependences on $\epsilon$ and $\invtemp$.
The distribution of $(\bfx,\bfy)$ satisfies the Tsybakov-Mammen margin condition $\Pr[ \abs{ g'(\invtemp \bfx^\T\truth) - 1/2 } \leq t ] \leq C_0 t^{\alpha/(1-\alpha)}$ with $C_0 = O(1/\invtemp)$ and $\alpha = 1/2$~\citep{mammen1999smooth}, but this only improves excess classification error bounds, not parameter estimation error.

A different approach is to directly analyze the MLE by computing tight Taylor expansions of the estimation error, and using properties such as self-concordance of the logistic function, in a way that keeps track of the dimension dependence~\citep[e.g.,][]{he2000parameters,portnoy1988asymptotic,bach2010self,ostrovskii2021finite}, and, ideally, the inverse temperature.
These analyses of the MLE match the performance guarantees in the leading order terms as predicted by classical asymptotic analysis.
However, they essentially treat $\invtemp$ as a constant (resulting in suboptimal dependence on $\invtemp$ in lower order terms), which we cannot afford to do as the moderate and low temperature regimes are defined by the comparison of $\invtemp$ to $1/\epsilon$.
Redoing the analysis entirely in our setting boils down to showing $(\param - \truth)^\T \nabla L(\param) > 0$ for all $\param$ that are $\epsilon$ away from the true parameter vector $\truth$, with $L$ being the negative log-likelihood function.
At this point, we need a tight approximation of $\nabla L(\param)$ around $\nabla L(\truth)$, which one may hope to obtain using the self-concordance property.
However, in our setting, the inner product depends on $\param$, and a uniform bound does not give the optimal scaling.

A very recent work of \citet{kuchelmeister2023finite} studies the estimation of the parameter vector in a probit regression model (under a normal design) by maximizing the likelihood under a (misspecified) logistic regression model.
They directly use techniques from empirical process theory specialized to the normal design to provide upper bounds on the sample complexity needed to estimate $\truth$ up to a given $\ell_2$ error (under certain assumptions on the signal-to-noise ratio in the probit model; they are also concerned with estimation of the signal-to-noise ratio itself, which is beyond the scope of our work).
We leverage essential parts of their analysis to establish our sample complexity upper bound in the moderate and low temperature regimes of our problem, which requires new moment bounds in the logistic regression observation model.
We also give sample complexity bounds based on empirical risk minimization (for zero-one loss) in the low temperature regime, mostly using standard techniques.
As mentioned above, an optimal estimator for the high temperature regime was already known~\citep{plan2017high}.

Some of estimators we study are applicable under other designs, but the analyses in this work make heavy use of symmetry properties of the normal distribution.
Bounds on various Gaussian integrals essential in our proofs are collected in \Cref{sec:integrals}.

\subsection{Other related works}
\label{sec:related}

\paragraph{Improper learning.}
Several prior works analyze ``improper learning'' algorithms for (possibly misspecified) logistic regression~\citep[e.g.,][]{kakade2004online,zhang2006information,hazan2014logistic,foster2018logistic,mourtada2022improper} that do not necessarily produce an estimate of $\truth$, which may not be sensible anyway if the model is misspecified.
Instead, the goal of these algorithms is to achieve low prediction error guarantees.
The lower bound of \citet{hazan2014logistic} exploits misspecification to show that larger parameter norm is more detrimental to proper online learning than it is to improper online learning.
In our well-specified setting for parameter estimation, the parameter norm has a very different effect.

\paragraph{High-dimensional proportional asymptotic analysis.}
Another line of work considers the performance of MLE and regularized variants in the proportional asymptotic regime, where both $d$ and $n$ increase to infinity with $d/n$ tending to a constant $\delta > 0$.
In this setup, \citet{sur2019modern} are able to precisely characterize the region in the plane of $\delta$ and $\invtemp$ where the MLE exists.
\citet{salehi2019impact} consider the various regularized variants of MLE and characterize their asymptotic performance.
Neither work directly reveals the dependence of the sample complexity on $\invtemp$.

\subsection{Notations}

For notational convenience, we assume that a Bernoulli distribution $\Bernoulli(p)$ has support on $\set{-1,1}$, with the ``mean parameter'' $p$ being the probability of $1$.
We occasionally associate each $\param \in \sphere$ with a homogeneous linear classifier $h_\param \colon \R^d \to \set{-1,1}$, given by $h_\param(x) = 1$ if $x^\T\param > 0$ and $h_\param(x) = -1$ otherwise.
For any $\param, \param' \in \sphere$, let $\err_\param(\param') = \Pr(h_{\param'}(\bfx) \neq \bfy) = \Pr(\bfy\bfx^\T\param'\leq0)$ be the error rate of $h_{\param'}$ when the distribution of $(\bfx,\bfy)$ is specified by parameter $\param$.

\section{Lower bounds on the sample complexity}

In this section, we give two lower bounds on the sample complexity.

\subsection{Moderate and high temperatures}

The following \namecref{thm:moderatetemp} establishes our sample complexity lower bound for moderate and high temperatures.

\begin{theorem}
  \label[theorem]{thm:moderatetemp}
  Fix $\epsilon \in \intoo{0,1}$.
  Suppose $\estimate$ is an estimator that, for any $\truth \in \sphere$,
  \begin{equation*}
    \E\sbr*{ \norm{\estimate((\bfx_i,\bfy_i)_{i=1}^n) - \truth} } \leq \epsilon ,
  \end{equation*}
  where the expectation is with respect to the data with distribution determined by $\truth$.
  Then the sample size $n$ must satisfy
  \begin{equation*}
    n \geq \frac{(d-1)\ln2 - \ln4}{32\epsilon^2\invtemp\min\set{\invtemp, 2\sqrt{2/\pi}}} .
  \end{equation*}
\end{theorem}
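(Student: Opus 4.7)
The plan is to invoke a Bayesian form of Fano's inequality on a local packing of $\sphere$ and to pair it with a tight bound on the pairwise KL divergence between the observation distributions indexed by nearby parameters.

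For the packing, by a standard volume argument in a tangent ball of radius $O(\epsilon)$ around any fixed $\param_0 \in \sphere$, I would produce $M \geq 2^{d-1}$ parameters $\param_1,\dotsc,\param_M \in \sphere$ that are pairwise separated by $\Theta(\epsilon)$ while all lying within $O(\epsilon)$ of $\param_0$. Under the uniform prior on this packing, Markov's inequality applied to $\E\norm{\estimate - \truth} \leq \epsilon$ forces a nearest-packing-point decoder to recover the true index with probability at least $1/2$, and Fano's inequality in the variant of \citet{zhang2006information} then yields
\begin{equation*}
n \cdot \max_{i \neq j} \KL(P_{\param_i} \| P_{\param_j}) \geq \tfrac{1}{2}\bigl((d-1)\ln 2 - \ln 4\bigr).
\end{equation*}

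The heart of the argument is the KL bound. Since $\bfx$ has the same marginal law under every $P_\param$, $\KL(P_\param \| P_{\param'})$ reduces to the expected KL divergence between two conditional Bernoulli laws, which equals the expected Bregman divergence of the log-partition function $g$, and integral-form Taylor expansion gives
\begin{equation*}
\KL(P_\param \| P_{\param'})
= \invtemp^2 \int_0^1 (1-t)\, \E\sbr*{(\bfx^\T v)^2\, g''(\invtemp \bfx^\T \param_t)}\, dt,
\end{equation*}
with $v := \param' - \param$ and $\param_t := \param + tv$. The crude bound $g'' \leq 1/4$ already yields $\KL \leq \invtemp^2 \norm{v}^2/8$, responsible for the $\invtemp^2$ factor in the high-temperature regime. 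For the moderate regime I would exploit the joint Gaussian structure: write $\bfx^\T v = a_t \bfx^\T \param_t + \xi_t$ with $\xi_t$ independent of $\bfx^\T \param_t$. Using $\param^\T v = -\norm{v}^2/2$ (from $\param, \param' \in \sphere$), one finds $a_t = O(\norm{v}^2)$ and $\var(\xi_t) \leq \norm{v}^2$, so up to an $O(\norm{v}^4)$ correction the inner expectation is at most $\norm{v}^2 \, \E\sbr{g''(\invtemp \bfx^\T \param_t)}$. The remaining one-dimensional Gaussian expectation is controlled by combining $g'' \leq 1/4$ with the tail estimate $g''(\eta) \leq e^{-\abs{\eta}}$ (or the sharper Gaussian integrals collected in \Cref{sec:integrals}), producing $\E\sbr{g''(\invtemp W)} \leq 1/\bigl(\sqrt{2\pi}\min\set{\invtemp, 2\sqrt{2/\pi}}\bigr)$ for $W \sim \Normal(0,1)$; the elementary identity $\min\set{1/4, 1/(\invtemp\sqrt{2\pi})} = 1/\bigl(\sqrt{2\pi}\min\set{\invtemp, 2\sqrt{2/\pi}}\bigr)$ is what produces the precise form of the denominator. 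Substituting back yields the uniform pairwise bound $\KL(P_\param \| P_{\param'}) \lesssim \invtemp \min\set{\invtemp, 2\sqrt{2/\pi}}\, \norm{v}^2$, and plugging $\norm{v} = O(\epsilon)$ into the Fano inequality produces the stated bound, with the constant $32$ tracking the explicit packing and Fano constants.

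The main obstacle is securing the sharp $\min\set{\invtemp, 2\sqrt{2/\pi}}$ factor rather than the $\invtemp$ that a naive second-order expansion would give: a direct Cauchy--Schwarz between $(\bfx^\T v)^2$ and $g''(\invtemp \bfx^\T \param_t)$ decouples them too loosely, so the proof must lean on the near-orthogonality of $v$ and $\param$ on $\sphere$ to reduce the quadratic form to essentially $\norm{v}^2$ times a scalar Gaussian integral of $g''(\invtemp W)$, which is the only quantity whose $\invtemp$-dependence genuinely improves to $1/\invtemp$ in the moderate regime.
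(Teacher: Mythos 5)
Your proposal follows the same overall architecture as the paper's proof---a local prior supported near a fixed point of $\sphere$, the \citet{zhang2006information} form of Fano's inequality, and a KL bound carrying the $\min\set{\invtemp, O(1)}$ refinement---and it is correct in its essentials; the genuine difference is in how the key KL estimate is derived. The paper avoids any Taylor remainder: writing the KL divergence between the Bernoulli conditionals as the Bregman divergence of $g$ and using that $\invtemp\bfz$ and $\invtemp\bfz'$ have the same marginal law, the expected KL collapses \emph{exactly} to $\E\sbr{g'(\invtemp\bfz)\,\invtemp(\bfz-\bfz')} = \invtemp(1-\rho)\E\sbr{g'(\invtemp\bfz)\bfz} = \invtemp^2(1-\rho)\E\sbr{g''(\invtemp\bfz)}$ via the decomposition $\bfz'=\rho\bfz+\sqrt{1-\rho^2}\bfz_\perp$ and Stein's identity, after which $\E\sbr{g''(\invtemp\bfz)}\leq\min\set{1/2,\sqrt{2/\pi}/\invtemp}$ yields the stated constant with no lower-order terms. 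Your integral-form second-order expansion combined with the conditional decomposition $\bfx^\T v = a_t\bfx^\T\param_t+\xi_t$ reaches the same leading term $\norm{v}^2\,\E\sbr{g''(\invtemp\bfx^\T\param_t)}$, but at the cost of an $O(\norm{v}^4)$ correction (from $a_t=(t-\tfrac12)\norm{v}^2/\norm{\param_t}^2$), the need to also control $\E\sbr{W^2 g''(\invtemp W)}$, and the fact that $\param_t$ is not a unit vector (so $\norm{\param_t}$ degenerates for nearly antipodal pairs, relevant only when $\epsilon$ is not small); this is why your route recovers the bound only up to constants rather than with the exact $32$. Two small corrections: the ``elementary identity'' should read $\min\set{1/4,\,c/\invtemp}=c/\max\set{\invtemp,4c}$ with a $\max$, not a $\min$, in the denominator---as written your right-hand side does not decay in $\invtemp$---although the final form $\invtemp\min\set{\invtemp,2\sqrt{2/\pi}}\norm{v}^2$ you state is the correct one; and the paper uses the continuous uniform prior on a spherical cap (so the small-ball probability is read off directly) rather than a discrete packing with a Markov/decoder reduction, though your discrete variant is equally valid.
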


The proof of \Cref{thm:moderatetemp} uses the following information theoretic lower bound method due to \citet{zhang2006information}.
\begin{lemma}[Theorem 6.1 of~\citealp{zhang2006information}]
  \label[lemma]{lem:fano}
  Let $\prior$ be a probability measure on a parameter space $\Params$ indexing a family of probability measures $(P_{\param})_{\param \in \Params}$ on a data space $\calZ$, and let $L \colon \Params \times \Params \to \R$ be a loss function.
  Let $\bfparam \sim \prior$ and $\bfZ \mid \bfparam \sim P_{\bfparam}$.
  For any (possibly randomized) estimator $\estimate \colon \calZ \to \Params$,
  \begin{equation*}
    \E\sbr*{
      L(\bftheta,\estimate(\bfZ))
    }
    \geq \frac12
    \sup\set*{
      \varepsilon : \inf_{\param \in \Params} -\ln(\prior(B(\param,\varepsilon)))
      \geq 2\klbound + \ln 4
    }
  \end{equation*}
  where $B(\param,\varepsilon) = \set{ \param' \in \Params : L(\param,\param') < \varepsilon }$ and $\klbound = \E_{(\bfparam,\bfparam') \sim \prior \otimes \prior}\sbr*{\KL(P_{\bfparam} \| P_{\bfparam'})}$.
\end{lemma}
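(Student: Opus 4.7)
The plan is to reproduce the Bayesian Fano-type argument of \citet{zhang2006information}: reduce from an expected loss bound to a tail probability bound via Markov's inequality, and then control the tail using an elementary change-of-measure inequality combined with the convexity of KL divergence.

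First, I would apply Markov's inequality: for any $\varepsilon$ in the supremum set it suffices to show $\P(L(\bftheta, \estimate(\bfZ)) < \varepsilon) \leq 1/2$, since then
\begin{equation*}
  \E\sbr*{L(\bftheta, \estimate(\bfZ))} \geq \varepsilon \cdot \P(L(\bftheta, \estimate(\bfZ)) \geq \varepsilon) \geq \varepsilon/2,
\end{equation*}
and the claim follows upon taking the supremum over admissible $\varepsilon$. Treating $L$ as effectively symmetric (as it is in the intended application where $L$ is a metric), the event $\{L(\bftheta, \estimate(\bfZ)) < \varepsilon\}$ coincides with $\{\bftheta \in B(\estimate(\bfZ), \varepsilon)\}$, so by conditioning on $\bfZ$,
\begin{equation*}
  \P(L(\bftheta, \estimate(\bfZ)) < \varepsilon) = \E\sbr*{\prior(B(\estimate(\bfZ), \varepsilon) \mid \bfZ)},
\end{equation*}
where $\prior(\cdot \mid \bfZ)$ denotes the posterior distribution of $\bftheta$ given $\bfZ$.

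Next I would apply the elementary change-of-measure bound: for any event $A$ and probability measures $P \ll Q$ with $Q(A) < 1$,
\begin{equation*}
  P(A) \leq \frac{\KL(P \| Q) + \ln 2}{-\ln Q(A)},
\end{equation*}
which follows by the data-processing inequality for KL (projecting onto the two-atom partition $\{A,A^c\}$) together with the binary entropy bound $H(P(A)) \leq \ln 2$. I would apply this with $P = \prior(\cdot \mid \bfZ)$, $Q = \prior$, and $A = B(\estimate(\bfZ), \varepsilon)$, then take expectation over $\bfZ$. The condition on $\varepsilon$ in the supremum yields $-\ln \prior(B(\estimate(\bfZ), \varepsilon)) \geq 2\klbound + \ln 4$ uniformly in $\bfZ$, so this denominator can be pulled out. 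For the numerator, $\E\sbr*{\KL(\prior(\cdot \mid \bfZ) \| \prior)} = I(\bftheta;\bfZ)$, and by Jensen's inequality applied to the convex map $\KL(P_\bftheta \| \cdot)$ evaluated at the marginal $\bar P = \E_{\bftheta'}[P_{\bftheta'}]$,
\begin{equation*}
  I(\bftheta; \bfZ) = \E_\bftheta\sbr*{\KL(P_\bftheta \| \bar P)} \leq \E_{\bftheta,\bftheta'}\sbr*{\KL(P_\bftheta \| P_{\bftheta'})} = \klbound.
\end{equation*}
Combining everything, $\E\sbr*{\prior(B(\estimate(\bfZ), \varepsilon) \mid \bfZ)} \leq (\klbound + \ln 2)/(2\klbound + \ln 4) = 1/2$, which is precisely what Markov's step requires.

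The main obstacle is the change-of-measure step: while the posterior-to-prior inequality is elementary, applying it pointwise in $\bfZ$ and then averaging cleanly relies crucially on the \emph{uniform} hypothesis $\inf_\theta -\ln \prior(B(\theta,\varepsilon)) \geq 2\klbound + \ln 4$, which lets the denominator be taken outside the expectation before the convexity bound on $I(\bftheta;\bfZ)$ is invoked. A secondary subtlety is the implicit symmetry of $L$ used to rewrite $\{\estimate(\bfZ) \in B(\bftheta, \varepsilon)\}$ as $\{\bftheta \in B(\estimate(\bfZ), \varepsilon)\}$; in a non-symmetric setting one would define the ball in the direction matched to the posterior, and the same argument goes through.
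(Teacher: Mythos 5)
Your proof is correct. Note that the paper itself does not prove this lemma --- it is quoted verbatim from Theorem~6.1 of \citet{zhang2006information} and used as a black box --- so there is no in-paper argument to compare against; your derivation is the standard Bayesian--Fano route (Markov's inequality, the change-of-measure bound $P(A) \leq (\KL(P\|Q)+\ln 2)/(-\ln Q(A))$ applied to posterior versus prior, and the Jensen/convexity step $I(\bftheta;\bfZ) \leq \klbound$), and the constants work out exactly: $(\klbound+\ln 2)/(2\klbound+\ln 4) = 1/2$. Two minor points worth recording: the Markov step implicitly requires $L \geq 0$ (true in the paper's application, where $L$ is the $\ell_2$ distance, but not part of the stated hypotheses), and you correctly flag that rewriting $\set{\estimate(\bfZ) \in B(\bftheta,\varepsilon)}$ as $\set{\bftheta \in B(\estimate(\bfZ),\varepsilon)}$ uses symmetry of $L$; both are harmless here.
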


\begin{proof}[Proof of \Cref{thm:moderatetemp}]
  We prove the contrapositive.
  Assume that
  \begin{equation*}
    n < \frac{(d-1)\ln2 - \ln4}{32\epsilon^2\invtemp\min\set{\invtemp, 2\sqrt{2/\pi}}} .
  \end{equation*}
  Set $\fano = 4\epsilon$.
  Fix a unit vector $u \in \sphere$, and let $C = \set{ \param \in \sphere : \norm{\param - u} \leq \fano }$ be the spherical cap of radius $\fano$ around $u$.
 Let $\Params = \sphere$, $\prior$ be the uniform measure on $C$, and $L(\param,\param') = \norm{\param - \param'}$.
  For each $\param \in \Params$, we let $P_{\param}$ denote the joint distribution of the data $\bfZ = ((\bfx_1,\bfy_1),\dotsc,(\bfx_n,\bfy_n))$ as determined by our model with parameter $\param$.
  Observe that $\prior(B(\param,\varepsilon)) \leq (\varepsilon/\fano)^{d-1}$ for any $\varepsilon \leq \fano$.
  Therefore, provided that
  \begin{equation*}
    (d-1)\ln2 > 2\klbound + \ln4 ,
  \end{equation*}
  we have $\E\sbr{ \norm{\bfparam - \estimate(\bfZ)} } > \fano/4 = \epsilon$ by \Cref{lem:fano}.
  So it remains to establish the above displayed inequality.

  Since the $n$ data are i.i.d.~in each of $P_{\param}$ and $P_{\param'}$, and the marginal distribution of $\bfx$ is the same in both $P_{\param}$ and $P_{\param'}$, the chain rule for KL divergence implies
  \begin{equation*}
    \KL(P_{\param} \| P_{\param'})
    = n \E\sbr*{
      \KL(\Bernoulli(g'(\invtemp\bfx^\T\param)) \| \Bernoulli(g'(\invtemp\bfx^\T\param')))
    }
    .
  \end{equation*}
  Let $\bfz = \bfx^\T\param$ and $\bfz' = \bfx^\T\param'$, so each of $\bfz$ and $\bfz'$ is a standard normal random variable, and the correlation $\rho$ between $\bfz$ and $\bfz'$ satisfies
  \begin{equation*}
    \rho
    = \param^\T\param'
    = 1 - \frac12\norm{\param - \param'}^2
    \geq 1 - \frac12(2\fano)^2 = 1 - 2\fano^2 ,
  \end{equation*}
  where we have used the triangle inequality $\norm{\param - \param'} \leq \norm{\param - u} + \norm{\param' - u} \leq 2\fano$.
  By \Cref{lem:klbound} (see \Cref{sec:integrals}),
  \begin{align*}
    \E\sbr*{ \KL(\Bernoulli(g'(\invtemp\bfx^\T\param)) \| \Bernoulli(g'(\invtemp\bfx^\T\param'))) }
    & = \E\sbr*{ \KL(\Bernoulli(g'(\invtemp\bfz)) \| \Bernoulli(g'(\invtemp\bfz'))) } \\
    & \leq \frac{\invtemp}{2} \del{1-\rho} \min\set*{ \invtemp, 2\sqrt{2/\pi} } \\
    & \leq \fano^2 \invtemp \min\set*{ \invtemp, 2\sqrt{2/\pi} }
    .
  \end{align*}
  Therefore
  \begin{equation*}
    2\klbound + \ln4
    \leq 2n\fano^2\invtemp\min\set{ \invtemp, 2\sqrt{2/\pi} } + \ln4
    < (d-1)\ln2
    .
    \qedhere
  \end{equation*}
\end{proof}

\subsection{Low temperatures}

The sample complexity lower bound from \Cref{thm:moderatetemp} tends to $0$ as $\invtemp \to \infty$.
This appears to be a limitation of the proof technique, which is only useful for $\invtemp \lesssim 1/\epsilon$.
We next establish a lower bound that improves on \Cref{thm:moderatetemp} for $\invtemp \gg 1/\epsilon$.

\begin{theorem}
  \label[theorem]{thm:lowtemp}
  Fix $\epsilon \in \intoo{0,1}$.
  Assume $\invtemp \geq 4\sqrt{2/\pi}/\epsilon$.
  Suppose $\estimate$ is an estimator that, for any $\truth \in \sphere$,
  \begin{equation*}
    \Pr\del{ \norm{\estimate((\bfx_i,\bfy_i)_{i=1}^n) - \truth} < \epsilon } \geq \frac12 ,
  \end{equation*}
  where the probability is with respect to the data with distribution determined by $\truth$.
  Then the sample size $n$ must satisfy
  \begin{equation*}
    n \geq \frac{d-1}{\epsilon} \cdot \frac{\log(\log(2e/\epsilon)) - \log(16e)}{(1+o(1))\log(2e/\epsilon)}
  \end{equation*}
  where the $o(1)$ term depends only $\epsilon$ and vanishes as $\epsilon \to 0$.
  If $\invtemp = \infty$, then the sample size must, in fact, satisfy $n \geq (d-1)/(8e\epsilon)$.
\end{theorem}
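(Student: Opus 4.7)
The plan is to adapt the combinatorial lower bound of \citet{long1995sample} for noiseless halfspace learning to the noisy logistic model; I first describe the argument in the warm-up case $\invtemp = \infty$. Condition on the design $\bfx_1, \dotsc, \bfx_n$; the central hyperplanes $\set*{u : u^\T \bfx_i = 0}$ partition $\sphere$ into cells $C_s := \set*{\param \in \sphere : \sign(\bfx_i^\T \param) = s_i \text{ for all } i}$ indexed by sign patterns $s \in \set*{-1,1}^n$. A Schl\"afli-type bound gives that the number of nonempty cells is at most $N \leq 2\sum_{k=0}^{d-1}\binom{n-1}{k} \leq 2\del*{en/(d-1)}^{d-1}$ when $n \geq d-1$. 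When $\invtemp = \infty$, labels are determined by the cell, so for each realization of $\bfx$ the estimator factors through the sign pattern, $\estimate = \hat{f}_\bfx(s)$. Placing a uniform prior $\sigma$ on $\sphere$ and averaging the hypothesis $\Pr(\norm{\estimate - \truth} < \epsilon) \geq 1/2$ over $\truth \sim \sigma$ yields
\begin{equation*}
  \frac12 \leq \E_{\bfx}\sbr*{ \sum_{s} \sigma\del*{C_s \cap B(\hat{f}_\bfx(s), \epsilon)} } \leq \E_{\bfx}\sbr*{N} \cdot v(\epsilon) ,
\end{equation*}
where $v(\epsilon)$ is the normalized volume of a spherical cap of Euclidean radius $\epsilon$. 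A direct computation (in the spirit of \Cref{sec:integrals}) yields $v(\epsilon) \lesssim \epsilon^{d-1}$, and inverting the combined inequality produces the stated bound $n \geq (d-1)/(8e\epsilon)$.

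For finite $\invtemp \geq 4\sqrt{2/\pi}/\epsilon$, the labels are stochastic and the estimator depends on the noisy $\bfy$, not only on the sign pattern. The core idea is to \emph{couple} the noisy labels to the noiseless ones. Fix a margin threshold $\tau > 0$ and define the event $E_\tau := \set*{\abs{\bfx_i^\T \truth} \geq \tau \text{ for all } i \leq n}$. Conditional on $E_\tau$, each $\bfy_i$ agrees with $\sign(\bfx_i^\T \truth)$ with probability at least $g'(\invtemp\tau) \geq 1 - e^{-\invtemp\tau}$, so by a union bound all $n$ labels agree with probability at least $1 - n e^{-\invtemp\tau}$. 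For $\truth$ uniform on $\sphere$, the standard normality of each $\bfx_i^\T\truth$ gives $\Pr(\abs{\bfx_i^\T \truth} < \tau) \leq \tau\sqrt{2/\pi}$, so the prior mass killed by the complement of $E_\tau$ is at most $n\tau\sqrt{2/\pi}$. On the intersection of $E_\tau$ with the coupled good event, the estimator reduces to a function of the noiseless sign pattern, and the $\invtemp = \infty$ counting argument applies verbatim.

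The stated logarithmic factor comes from optimizing $\tau$: one needs $\invtemp\tau$ at least of order $\log(n)$ to drive $n e^{-\invtemp\tau}$ below a constant, while $n\tau$ must remain below a constant so that the margin restriction does not destroy the averaged $1/2$ success probability. With $\invtemp \geq c/\epsilon$ and $n$ of order $(d-1)/\epsilon$, the sharp choice is roughly $\tau \asymp \epsilon/\log(2e/\epsilon)$, contributing the $\log\log(2e/\epsilon)/\log(2e/\epsilon)$ ratio that appears in the theorem; the $o(1)$ term arises from subleading contributions to this optimization. The main obstacle is to maintain the coupling \emph{uniformly} over all parameters simultaneously---rather than pointwise, as in a two-point Le Cam argument---so that the cell-counting survives integration against the global prior with only the advertised logarithmic loss. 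Delicately balancing the margin $\tau$ against the exponential decay of the mismatch probability is the crucial new ingredient absent from Long's original noiseless analysis.
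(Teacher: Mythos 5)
Your $\invtemp=\infty$ argument is sound and is essentially the paper's (a dual phrasing: cap volume against a uniform prior rather than a packing, but the same Schl\"afli/shatter count and the same conclusion). The gap is in the finite-$\invtemp$ case. Your coupling requires \emph{all} $n$ noisy labels to agree with the noiseless signs, and the two constraints you state ($\invtemp\tau \gtrsim \log n$ to control $n e^{-\invtemp\tau}$, and $n\tau \lesssim 1$ to preserve prior mass) are jointly satisfiable only when $\invtemp \gtrsim n\log n \asymp (d/\epsilon)\log(d/\epsilon)$, a dimension-dependent condition far stronger than the hypothesis $\invtemp \geq 4\sqrt{2/\pi}/\epsilon$. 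Your proposed choice $\tau \asymp \epsilon/\log(2e/\epsilon)$ satisfies neither constraint: $n\tau \asymp d/\log(2e/\epsilon)$ is not $O(1)$, and $\invtemp\tau \asymp 1/\log(2e/\epsilon) \to 0$, so $n e^{-\invtemp\tau} \approx n$. The failure is not an artifact of the union bound: under the theorem's assumption the per-sample disagreement probability is $\err_{\param}(\param) \asymp 1/\invtemp \asymp \epsilon$, so the expected number of labels flipped relative to $h_{\param}$ is $\Theta(n\epsilon) = \Theta(d)$, and exact agreement of all $n$ labels has probability $e^{-\Omega(d)}$. No choice of margin threshold rescues an exact coupling.

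The paper's proof instead \emph{tolerates} disagreements: it defines the good event $G_\param$ to require both estimator success and empirical error rate $\emperr_\param(\param) \leq \epsilon$ (which holds with probability $\geq 3/4$ by Markov, since $\err_\param(\param) \leq \epsilon/4$ by \Cref{lem:opt}), and then counts label vectors as ``halfspace labelings with up to $\floor{n\epsilon}$ flips,'' multiplying the shatter number by $2^{\floor{n\epsilon}}\binom{n}{\floor{n\epsilon}}$. Balancing $\card{U}/4 \geq \frac14(2\epsilon)^{-(d-1)}$ against $2(ne/(d-1))^{d-1}(2e/\epsilon)^{n\epsilon}$ and solving via Lambert-W asymptotics is what produces the $\log\log(2e/\epsilon)/\log(2e/\epsilon)$ factor --- it does not come from optimizing a margin threshold. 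To repair your argument you would need to replace the exact coupling with this flip-tolerant counting step (or an equivalent device); as written, the reduction to the noiseless sign pattern ``applies verbatim'' only on an event of exponentially small probability in $d$.
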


The proof of \Cref{thm:lowtemp} essentially follows that of \citet{long1995sample} for a lower bound on the sample complexity of PAC learning homogeneous linear classifiers under the uniform distribution on $\sphere$.
In \citeauthor{long1995sample}'s setting, the data distribution coincides with ours for $\invtemp = \infty$.
We make a minor modification to the argument to also handle finite (but large) $\invtemp$.
The extra $\log\log(1/\epsilon)/\log(1/\epsilon)$ factor in the finite $\invtemp$ case appears to be an artifact of the proof technique.
We also note that the $\invtemp = \infty$ case is directly implied by the main result of \citet{long1995sample}.

\citeauthor{long1995sample}'s proof, as well as ours, relies on the following bound on the shattering number of homogeneous linear classifiers.
\begin{lemma}[\citealp{winder1966partitions}, Corollary on page 816]
  \label[lemma]{lem:shatter}
  Let $H$ be the family of homogeneous linear classifiers in $\R^d$.
  For any $x_1,\dotsc,x_n \in \R^d$,
  \begin{equation*}
    \card{\set{ (h(x_1),\dotsc,h(x_n)) : h \in H }} \leq 2\del*{ \frac{ne}{d-1} }^{d-1} .
  \end{equation*}
\end{lemma}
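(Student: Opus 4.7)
The plan is to invoke the classical Schläfli-type counting argument for arrangements of hyperplanes through the origin, then convert the resulting binomial sum into the stated polynomial bound. Each homogeneous linear classifier $h \in H$ is determined by a weight vector $w \in \R^d \setminus \set{0}$ via $h(x) = \sign(w^\T x)$, and the sign pattern $(h(x_1),\dotsc,h(x_n))$ depends only on which face of the central hyperplane arrangement $\calA = \set{H_1,\dotsc,H_n}$ the vector $w$ lies on, where $H_i = \set{u \in \R^d : u^\T x_i = 0}$. The number of distinct sign patterns is therefore at most the total number of faces of $\calA$. After a small perturbation of the $x_i$ placing the hyperplanes in general position---a perturbation that can only enlarge the face count---it suffices to bound the face count of a generic central arrangement.

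The core combinatorial step bounds the maximum number $r(n,d)$ of full-dimensional regions in an arrangement of $n$ hyperplanes through the origin in $\R^d$ via a Schläfli-type recursion. Adding an $n$-th hyperplane $H_n$ to an arrangement of $n-1$ central hyperplanes, the traces $\set{H_i \cap H_n : i < n}$ form a central arrangement of $n-1$ hyperplanes inside $H_n \cong \R^{d-1}$, and each of its $r(n-1,d-1)$ regions cuts exactly one old region of the ambient arrangement in two. This yields $r(n,d) \leq r(n-1,d) + r(n-1,d-1)$, with base cases $r(n,1) = 2$ (for $n \geq 1$) and $r(0,d) = 1$, whose solution by induction is the closed form
\begin{equation*}
  r(n,d) \leq 2\sum_{k=0}^{d-1}\binom{n-1}{k}.
\end{equation*}
The stated polynomial bound then follows from the standard estimate $\sum_{k=0}^{d-1}\binom{n-1}{k} \leq (e(n-1)/(d-1))^{d-1} \leq (en/(d-1))^{d-1}$.

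The main technical obstacle is the paper's asymmetric convention $h(x) = -1$ whenever $w^\T x = 0$: in degenerate configurations, some binary sign patterns are attained only for weights lying on positive-codimension faces of $\calA$, not in any chamber. Counting only chambers is therefore not quite sufficient; one must count \emph{all} faces of $\calA$. Happily, the same Schläfli recursion (together with the fact that each lower-dimensional face of a generic central arrangement in $\R^d$ corresponds to a chamber of an induced generic central arrangement in some $\R^{d-k}$) governs the total face count and still fits inside the same $2(en/(d-1))^{d-1}$ envelope after tracking the base cases through dimensions $1,2,\dotsc,d$. This bookkeeping is the only ingredient beyond the classical region-counting proof.
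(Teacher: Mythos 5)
The paper does not actually prove this lemma---it is imported verbatim from \citet{winder1966partitions}---so your attempt is a from-scratch reconstruction, and most of it is sound: the reduction to counting cells of the central arrangement $\calA$, the Schl\"afli recursion $r(n,d)\le r(n-1,d)+r(n-1,d-1)$ with solution $2\sum_{k=0}^{d-1}\binom{n-1}{k}$, the estimate $\sum_{k=0}^{d-1}\binom{n-1}{k}\le (en/(d-1))^{d-1}$ (valid only for $n\ge d$, the sole regime in which the stated bound is not vacuously false anyway), and the observation that the asymmetric tie-breaking $h_\param(x)=-1$ on $x^\T\param=0$ forces you to account for sign patterns realized only on positive-codimension faces. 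The genuine gap is your final claim that the \emph{total} face count of a generic central arrangement still fits inside the envelope $2(en/(d-1))^{d-1}$. It does not. For $d=3$ and $n$ planes through the origin in general position, the faces number $(n^2-n+2)+2n(n-1)+n(n-1)=4n^2-4n+2$, whereas $2(en/2)^2=e^2n^2/2\approx 3.69\,n^2$; the face count exceeds the envelope for all $n\ge 13$. So ``patterns $\le$ faces'' is quantitatively too lossy, and no bookkeeping of base cases through dimensions can repair it.

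The correct resolution is that, in general position, the faces contribute \emph{no} patterns beyond those of the chambers, so only the chamber count $2\sum_{k=0}^{d-1}\binom{n-1}{k}$ is ever needed. Indeed, if $w$ lies on a face with zero set $S=\set{i: w^\T x_i=0}$, general position makes $\set{x_i: i\in S}$ linearly independent, hence contained in an open halfspace $\set{x: v^\T x>0}$ for some $v$; then $w-\epsilon v$ lies in a chamber and realizes the same pattern, since $(w-\epsilon v)^\T x_i=-\epsilon v^\T x_i<0$ keeps the coordinates in $S$ at $-1$ while the strict signs off $S$ persist for small $\epsilon$. A second, smaller gap: your perturbation step only argues that the face count does not decrease, but once you switch to counting chambers you instead need that a generic perturbation of the $x_i$ does not \emph{lose} realized patterns under this tie-breaking rule. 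That is not automatic---if two witnesses $w_\sigma$ and $-w_\sigma$ both vanish on some $x_i$, no perturbation of $x_i$ keeps both inner products nonpositive---so the degenerate case requires its own argument (e.g., a deletion--contraction induction on the set system $\set{\set{i: w^\T x_i>0}: w}$ that establishes the binomial-sum bound for arbitrary configurations directly).
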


We also need the following well-known lower bound on the packing number for $\sphere$; the proof is given for completeness.
\begin{lemma}
  \label[lemma]{lem:packing}
  There exists an $\epsilon$-packing of the unit sphere $\sphere$ with respect to $\ell_2$ distance of cardinality $(1/\epsilon)^{d-1}$.
\end{lemma}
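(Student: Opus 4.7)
The plan is to reduce this to a packing problem for the Euclidean unit ball in $\R^{d-1}$ and then apply a standard greedy / volume argument.

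For the reduction, I would use the projection $\pi$ from the upper hemisphere $\set{x \in \sphere : x_d \geq 0}$ onto the Euclidean unit ball $B^{d-1} = \set{y \in \R^{d-1} : \norm{y} \leq 1}$, defined by $\pi(x_1,\dotsc,x_d) = (x_1,\dotsc,x_{d-1})$. This is a bijection with inverse $\pi^{-1}(y) = (y,\sqrt{1 - \norm{y}^2})$, and it is $1$-Lipschitz because $\norm{\pi(x) - \pi(x')}^2 \leq \norm{x - x'}^2$ (one simply drops the nonnegative $(x_d - x'_d)^2$ term). Equivalently, $\norm{\pi^{-1}(y) - \pi^{-1}(y')} \geq \norm{y - y'}$ for all $y,y' \in B^{d-1}$, so any $\epsilon$-packing of $B^{d-1}$ lifts via $\pi^{-1}$ to an $\epsilon$-packing of $\sphere$.

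It therefore suffices to construct an $\epsilon$-packing of $B^{d-1}$ of cardinality at least $(1/\epsilon)^{d-1}$. For this I would take any maximal $\epsilon$-packing $\set{y_1,\dotsc,y_N} \subset B^{d-1}$ (obtained greedily). By maximality, the open balls $B(y_i,\epsilon) \subset \R^{d-1}$ cover $B^{d-1}$, since otherwise an uncovered point could be appended to the packing. Comparing $(d-1)$-dimensional Lebesgue volumes gives
\begin{equation*}
  \mathrm{vol}(B^{d-1})
  \;\leq\; \sum_{i=1}^N \mathrm{vol}(B(y_i,\epsilon))
  \;=\; N \cdot \epsilon^{d-1} \cdot \mathrm{vol}(B^{d-1}) ,
\end{equation*}
where the equality uses that volumes of Euclidean balls in $\R^{d-1}$ scale as the $(d-1)$-st power of the radius. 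Dividing through yields $N \geq (1/\epsilon)^{d-1}$, as required.

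There is no real obstacle in this argument: the whole proof is one greedy/volume step after the hemisphere-to-ball reduction, and the only thing to verify is the contraction property of the coordinate projection, which is immediate.
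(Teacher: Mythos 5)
Your proof is correct and follows essentially the same route as the paper: reduce to an $\epsilon$-packing of the unit ball in a $(d-1)$-dimensional space via a distance-nondecreasing lift to the sphere, and obtain the packing there by the standard maximal-packing/volume comparison (which the paper simply cites as "a standard volume argument"). No gaps.
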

\begin{proof}
  Let $W$ be a subspace of $d$-dimensional Euclidean space of dimension $d-1$.
  By a standard volume argument, there is an $\epsilon$-packing $p_1,\dotsc,p_M$ of the unit ball in $W$ with $M \geq (1/\epsilon)^{d-1}$.
  For each point $p_i$, there is a corresponding point $p_i' \in \sphere$ whose orthogonal projection to $W$ is $p_i$.
  For any $i \neq j$, we have $\norm{p_i' - p_j'} \geq \norm{p_i - p_j} \geq \epsilon$, so $p_1',\dotsc,p_M'$ is an $\epsilon$-packing of $\sphere$.
\end{proof}

Finally, we need the following bound on the error rate of the homogeneous linear classifier $h_{\truth}$.
(The \namecref{lem:opt} holds for all $\invtemp\geq0$, not just $\invtemp\gtrsim1/\epsilon$.)
\begin{lemma}
  \label[lemma]{lem:opt}
  The error rate of $h_{\truth}$ satisfies
  \begin{equation*}
    \err_{\truth}(\truth)
    \leq
    \frac1\invtemp \sqrt{\frac2\pi}
    .
  \end{equation*}
\end{lemma}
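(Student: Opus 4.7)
Condition on $\bfx$: since $\bfy \mid \bfx \sim \Bernoulli(g'(\invtemp\bfx^\T\truth))$ on $\set{-1,1}$, the conditional probability that $\bfy$ disagrees with $h_{\truth}(\bfx) = \sign(\bfx^\T\truth)$ equals $g'(-\invtemp\abs{\bfx^\T\truth}) = 1/(1+e^{\invtemp\abs{\bfx^\T\truth}})$. Setting $\bfz = \bfx^\T\truth \sim \Normal(0,1)$ (since $\truth$ is a unit vector), we therefore have
\begin{equation*}
  \err_{\truth}(\truth) \;=\; \E\sbr*{ \frac{1}{1+e^{\invtemp\abs{\bfz}}} } .
\end{equation*}

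The remaining task is to bound this scalar Gaussian integral. I would use the elementary inequality $1/(1+e^t) = e^{-t}/(1+e^{-t}) \leq e^{-t}$ for $t \geq 0$, applied with $t = \invtemp\abs{\bfz}$, to obtain
\begin{equation*}
  \err_{\truth}(\truth)
  \;\leq\; \E\sbr*{ e^{-\invtemp\abs{\bfz}} }
  \;=\; \sqrt{\frac{2}{\pi}} \int_0^\infty e^{-\invtemp z - z^2/2} \dif z .
\end{equation*}
Dropping the nonnegative $-z^2/2$ in the exponent gives
\begin{equation*}
  \err_{\truth}(\truth)
  \;\leq\; \sqrt{\frac{2}{\pi}} \int_0^\infty e^{-\invtemp z} \dif z
  \;=\; \frac{1}{\invtemp}\sqrt{\frac{2}{\pi}} ,
\end{equation*}
which is the claimed bound.

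There is no serious obstacle here: the only step requiring any thought is identifying the correct closed-form conditional error probability $g'(-\invtemp\abs{\bfx^\T\truth})$ and then choosing a clean upper bound on the logistic function that leaves an integral matching the desired $1/\invtemp$ scaling. The bound $1/(1+e^t)\leq e^{-t}$ combined with dropping the Gaussian tail factor is loose enough to evaluate in closed form yet tight enough to match the target constant $\sqrt{2/\pi}$.
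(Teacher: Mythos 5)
Your proof is correct and follows essentially the same route as the paper: the paper likewise reduces to $\err_{\truth}(\truth) = \E\sbr{g'(-\invtemp\abs{\bfz})} \leq \E\sbr{\exp(-\invtemp\abs{\bfz})}$ (via \Cref{lem:g'abs,lem:g''}) and bounds that Gaussian integral by $\tfrac1\invtemp\sqrt{2/\pi}$. The only cosmetic difference is the final step, where you drop the factor $e^{-z^2/2}\leq 1$ instead of completing the square and invoking the Gaussian tail bound of \Cref{lem:normaltail}; both yield the identical constant.
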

\begin{proof}
  Since
  \begin{equation*}
    \err_{\truth}(\truth)
    = \Pr(\bfy\bfx^\T\truth \leq 0)
    = \E\sbr{g'(-\invtemp\abs{\bfx^\T\truth})}
    = \E\sbr{g'(-\invtemp\abs{\bfz})}
  \end{equation*}
  for a normal random variable $\bfz$, the claim now follows from \Cref{lem:g'abs} (see \Cref{sec:integrals}).
\end{proof}

\begin{proof}[Proof of \Cref{thm:lowtemp}]
  Assume without loss of generality that $\epsilon \leq 1/2$.
  Let $\bfx_1,\dotsc,\bfx_n$ be independent standard normal random vectors.
  Let $\bfy_1^\param,\dotsc,\bfy_n^\param$ for all $\param \in \sphere$ be independent Bernoulli random variables with $\bfy_i^\param \mid (\bfx_1,\dotsc,\bfx_n) \sim \Bernoulli(g'(\invtemp\bfx_i^\T\param))$, and let $\bfZ^\param = ((\bfx_1,\bfy_1^\param),\dotsc,(\bfx_n,\bfy_n^\param))$.
  So the data $\bfZ^\param$ follows our model with parameter $\param$, but all $\set{ \bfZ^\param : \param \in \sphere }$ share the same $\bfx_1,\dotsc,\bfx_n$.
  Let $\emperr_\param(\param) = (1/n) \sum_{i=1}^n \ind{h_\param(\bfx_i) \neq \bfy_i^\param}$ be the empirical error rate of $h_\param$ (with respect to $\bfZ^\param$).

  Suppose $\Pr(\norm{\estimate(\bfZ^\param) - \param} < \epsilon) \geq 1/2$ for all $\param \in \sphere$.
  By \Cref{lem:opt}, since $\invtemp \geq 4\sqrt{2/\pi}/\epsilon$, we have
  \begin{equation*}
    \err_\param(\param) \leq \frac{\epsilon}4 .
  \end{equation*}
  Let $U$ be a $2\epsilon$-packing of $\sphere$ with respect to $\ell_2$ of cardinality $\card{U} \geq (2\epsilon)^{-(d-1)}$, as guaranteed to exist by \Cref{lem:packing}.
  Let $G_\param$ be the (indicator of) the event $\norm{\estimate(\bfZ^\param) - \param} < \epsilon$ and $\emperr_\param(\param) \leq \epsilon$.
  By Markov's inequality,
  \begin{equation*}
    \Pr(\emperr_\param(\param) \geq \epsilon) \leq \frac14 .
  \end{equation*}
  So
  \begin{equation*}
    \E\sbr*{ \sum_{\param \in U} G_\param }
    =
    \sum_{\param \in U} \E\sbr{ G_\param }
    \geq
    \sum_{\param \in U} \del*{ 1 - \frac12 - \frac14 }
    = \frac{\card{U}}4
    \geq \frac14 \del*{ \frac{1}{2\epsilon} }^{d-1} .
  \end{equation*}
  Consider any two $\param, \param' \in U$.
  If $\bfZ^\param = \bfZ^{\param'}$, then either $\norm{\estimate(\bfZ^\param) - \param} > \epsilon$ or $\norm{\estimate(\bfZ^{\param'}) - \param'} > \epsilon$ (since $\norm{\param - \param'} \geq 2\epsilon$), so at least one of $G_\param$ and $G_{\param'}$ is zero.
  Moreover, if $G_\param = 1$, then the labels $\bfy_i^\param$ are realized by the homogeneous linear classifier determined by $\param$---except for up to $\floor{n\epsilon}$ labels, which could be flipped.
  Therefore $\E\sbr*{ \sum_{\param \in U} G_\param }$ is at most the number of ways of labeling $(\bfx_1,\dotsc,\bfx_n)$ by homogeneous linear classifiers determined by weight vectors from $U$, multiplied by $2^{\floor{n\epsilon}} \binom{n}{\floor{n\epsilon}}$.
  Hence, by \Cref{lem:shatter},
  \begin{equation*}
    \E\sbr*{ \sum_{\param \in U} G_\param }
    \leq 2\del*{ \frac{ne}{d-1} }^{d-1} \cdot \del*{ \frac{2ne}{n\epsilon} }^{n\epsilon}
    = 2\del*{ \frac{e}{M\epsilon} \cdot \del*{ \frac{2e}{\epsilon} }^{1/M} }^{d-1}
  \end{equation*}
  where $M = (d-1)/(n\epsilon)$.
  Combining the upper and lower bounds on $\E\sbr*{ \sum_{\param \in U} G_\param }$ gives
  \begin{equation*}
    \frac{e}{M\epsilon} \cdot \del*{ \frac{2e}{\epsilon} }^{1/M}
    \geq \frac1{8^{1/(d-1)}} \cdot \frac{1}{2\epsilon}
    \geq \frac1{16\epsilon} .
  \end{equation*}
  Taking logarithms of both sides and simplifying gives the following inequality:
  \begin{equation*}
    \del*{\frac{M}{16e}} \log \del*{\frac{M}{16e}} \leq \frac{1}{16e} \log \frac{2e}{\epsilon} .
  \end{equation*}
  Let $T(\epsilon) = \log(2e/\epsilon)/(16e)$.
  Then, using asymptotic expansion of the product log function~\citep{corless1996lambert}, we have
  \begin{equation*}
    \frac{M}{16e} \leq \frac{T(\epsilon)}{\log T(\epsilon)}(1 + o(1)) ,
  \end{equation*}
  where the $o(1)$ term vanishes as $T(\epsilon) \to \infty$ (i.e., $\epsilon \to 0$).
  This implies
  \begin{equation*}
    n \geq \frac{d-1}{16e\epsilon} \cdot \frac{\log T(\epsilon)}{(1+o(1))T(\epsilon)}
  \end{equation*}
  as claimed.

  If $\invtemp = \infty$, then $\Pr(\emperr_\param(\param) = 0) = 1$, and hence we have
  \begin{equation*}
    \frac12 \del*{ \frac{1}{2\epsilon} }^{d-1}
    \leq \E\sbr*{ \sum_{\param \in U} G_\param }
    \leq 2\del*{ \frac{ne}{d-1} }^{d-1} .
  \end{equation*}
  Therefore
  \begin{equation*}
    n \geq \frac{d-1}{8e\epsilon}
    .
    \qedhere
  \end{equation*}
\end{proof}

\section{Upper bounds on the sample complexity}

In this section, we give upper bounds on the sample complexity based on three different estimators for the three different regimes of $\invtemp$.
Throughout this section, we fix a ``ground truth'' parameter $\truth \in \sphere$ determining the distribution of $(\bfx,\bfy)$.

\subsection{High temperatures}
\label{sec:hightempupper}

The analysis of \citet[Corollary 3.5]{plan2017high} implies, for any $\epsilon \in \intoo{0,1}$, the ``linear estimator'' $\linest$ of \citet{plan2012robust} (or the ``Average'' algorithm of \citet{servedio1999pac})
\begin{equation*}
  \linest((\bfx_i,\bfy_i)_{i=1}^n) = \argmax_{\param \in \sphere} \frac1n \sum_{i=1}^n \bfy_i\bfx_i^\T\param
\end{equation*}
satisfies $\norm{\linest((\bfx_i,\bfy_i)_{i=1}^n) - \truth} \leq \epsilon$ in expectation provided that
\begin{equation}
  \label{eq:hightempupper}
  n \geq C \max\set*{ \frac1{\invtemp^2} , 1 } \frac{d}{\epsilon^2} ,
\end{equation}
where $C>0$ is a universal positive constant.
This matches the lower bound from \Cref{thm:moderatetemp} (up to constants) when $\invtemp \lesssim 1$.

\subsection{Moderate and low temperatures}
\label{sec:moderatetempupper}

When $\invtemp \gtrsim 1$, the sample size requirement of the linear estimator in \Cref{eq:hightempupper} has a suboptimal dependence on $\invtemp$.
In particular, the sample size requirement does not become smaller as $\invtemp$ becomes larger.

In this section, we analyze a different estimator, the empirical ReLU risk minimizer $\reluest$:
\begin{equation}
  \label{eq:reluest}
  \reluest((\bfx_i,\bfy_i)_{i=1}^n) \in \argmin_{\param \in \sphere} \frac1n \sum_{i=1}^n \sbr{-\bfy_i\bfx_i^\T\param}_+ ,
\end{equation}
where $\sbr{x}_+ = \max\set{0,x}$ (the rectified linear function used in ReLUs).
Notice that if $\sbr{\cdot}_+$ was omitted in each summand of \Cref{eq:reluest}, then the estimator $\reluest$ would be the same as $\linest$.
This estimator is also related to the Perceptron algorithm~\citep{rosenblatt1958perceptron}, since the latter can be viewed as a stochastic subgradient method for minimizing the empirical ReLU risk.
However, we do not know if such subgradient methods minimize the objective over the nonconvex domain $\sphere$.

\begin{theorem}
  \label[theorem]{thm:moderatetempupper}
  Assume $\invtemp \geq 1+c$ for some positive constant $c>0$.
  Fix any $\epsilon, \delta \in \intoo{0,1}$, and suppose
  \begin{equation*}
    n \geq C \del*{
      \frac{d\log(d/\epsilon) + \log(1/\delta)}{\invtemp \epsilon^2}
      + \frac{d\log(d/\epsilon) + \log(1/\delta)}{\epsilon}
    }
  \end{equation*}
  where $C>0$ is an absolute constant.
  Then with probability at least $1-\delta$, we have
  \begin{equation*}
    \norm{\reluest((\bfx_i,\bfy_i)_{i=1}^n) - \truth} \leq \epsilon .
  \end{equation*}
\end{theorem}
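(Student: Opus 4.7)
The plan is to analyze $\reluest$ by (i) computing the population ReLU risk $R(\param) := \E\sbr*{[-\bfy\bfx^\T\param]_+}$ and establishing its quadratic growth around $\truth$, and then (ii) controlling fluctuations of the empirical risk uniformly using a localized empirical process argument, so that the optimality $\hat{R}_n(\reluest) \leq \hat{R}_n(\truth)$ forces $\reluest$ close to the unique population minimizer.

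For the population step, use $[-yz]_+ = \tfrac{1}{2}(|z| - yz)$ together with the identity $2g'(s) - 1 = \tanh(s/2)$ to write $R(\param) = \tfrac{1}{2}\sqrt{2/\pi} - \tfrac{1}{2}\E\sbr*{\bfx^\T\param \cdot \tanh(\invtemp \bfx^\T\truth/2)}$. Since $(\bfx^\T\param, \bfx^\T\truth)$ is bivariate standard normal with correlation $\rho = \param^\T\truth = 1 - \tfrac12\norm{\param-\truth}^2$, decomposing $\bfx^\T\param$ orthogonally to $\bfx^\T\truth$ reduces the inner expectation to $\rho A(\invtemp)$ where $A(\invtemp) := \E\sbr*{\bfz\tanh(\invtemp\bfz/2)}$ for $\bfz \sim \Normal(0,1)$. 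Hence
\begin{equation*}
  R(\param) - R(\truth) = \frac{A(\invtemp)}{4}\norm{\param-\truth}^2 .
\end{equation*}
A direct check shows $A(\invtemp)$ is nondecreasing in $\invtemp$ and tends to $\sqrt{2/\pi}$ as $\invtemp \to \infty$, and hence is bounded below by a positive constant $A_c > 0$ whenever $\invtemp \geq 1 + c$; thus the population excess risk has quadratic curvature around $\truth$.

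For the stochastic step, study the increment $f_\param(\bfx,\bfy) := [-\bfy\bfx^\T\param]_+ - [-\bfy\bfx^\T\truth]_+$. Two structural facts drive the moment bound: the $1$-Lipschitzness of $[\cdot]_+$ gives $\abs{f_\param} \leq \abs{\bfx^\T(\param-\truth)}$ (sub-Gaussian with parameter $\norm{\param-\truth}$); and $f_\param$ vanishes on $\set{\bfy\bfx^\T\param \geq 0,\, \bfy\bfx^\T\truth \geq 0}$, so it is supported on a misclassification event whose probability is at most $\err_\truth(\param) + \err_\truth(\truth)$. Combining \Cref{lem:opt} ($\err_\truth(\truth) \leq \sqrt{2/\pi}/\invtemp$) with the bound $\Pr[\sign(\bfx^\T\param) \neq \sign(\bfx^\T\truth)] \leq \norm{\param-\truth}/\pi$ and a Cauchy--Schwarz/conditional-expectation calculation yields the key new moment bound
\begin{equation*}
  \E\sbr*{f_\param^2} \lesssim \norm{\param - \truth}^2 \cdot \del*{\frac{1}{\invtemp} + \norm{\param - \truth}} ,
\end{equation*}
which refines the naive Lipschitz bound $\norm{\param-\truth}^2$ and quantifies how the margin $1/\invtemp$ suppresses the variance of the increment.

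Finally, feeding this moment bound into a Talagrand/chaining inequality on the localized class $\set{f_\param : \norm{\param-\truth} \leq r}$ (whose metric entropy at scale $\delta$ on the spherical cap around $\truth$ is of order $d\log(r/\delta)$) yields, with probability $\geq 1-\delta$,
\begin{equation*}
  \sup_{\norm{\param-\truth}\leq r} \abs*{(\hat{R}_n - R)(\param) - (\hat{R}_n - R)(\truth)}
  \lesssim \sqrt{\frac{r^2(r + 1/\invtemp)(d\log(d/\epsilon) + \log(1/\delta))}{n}} + \frac{r(d\log(d/\epsilon) + \log(1/\delta))}{n} .
\end{equation*}
Since $\hat{R}_n(\reluest) \leq \hat{R}_n(\truth)$, combining with the quadratic lower bound and peeling over dyadic scales $r > \epsilon$ forces $\norm{\reluest - \truth} \leq \epsilon$ under the claimed sample-size bound: the variance term contributes $d\log(d/\epsilon)/(\invtemp\epsilon^2)$ in the regime $r \lesssim 1/\invtemp$, while the envelope term contributes $d\log(d/\epsilon)/\epsilon$ in the complementary regime. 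The main technical obstacle is the sharp second-moment bound above, which requires handling separately the ``near-boundary'' contribution (where $\abs{\bfx^\T\truth}$ is small and misclassifications are likely) from the ``bulk'' contribution (where the margin $1/\invtemp$ dominates); this parallels the analogous moment computations in \citet{kuchelmeister2023finite} for the probit model.
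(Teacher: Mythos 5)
Your proposal follows essentially the same route as the paper: your quadratic population lower bound $R(\param)-R(\truth)=\tfrac{A(\invtemp)}{4}\norm{\param-\truth}^2$ is (via Stein's identity) exactly \Cref{lem:paramdiff}, your second-moment bound is the $q=2$ case of \Cref{lem:relumoments}, and your localized chaining-plus-peeling step matches the paper's combination of Bernstein's inequality, the covering argument of \Cref{lem:covering}, and dyadic shells around $\truth$. The only cautions are that plain Cauchy--Schwarz on the sign-disagreement event yields only $\norm{\param-\truth}^{5/2}$ rather than the needed $\norm{\param-\truth}^{3}$, and that concentration for these unbounded increments requires the full family of moment bounds for all $q\geq 2$ rather than just the variance; both points are resolved by the wedge computation of \citet{kuchelmeister2023finite} that you already cite, exactly as in the paper's \Cref{lem:kvdgmoments}.
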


The main technical work in the proof of \Cref{thm:moderatetempupper} is understanding the concentration properties of random variables of the form
\begin{equation*}
  \bfdelta_{\param} := \sbr{ -\bfy \bfx^\T \param }_+ - \sbr{ -\bfy \bfx^\T \truth }_+ , \quad \param \in \sphere .
\end{equation*}
This was studied by \citet{kuchelmeister2023finite} in the case of the probit regression model.
We obtain the necessary moment bounds for the logistic regression model.
The following \namecref{lem:relumoments} (proved in \Cref{sec:relumomentsproof}) gives the required bounds.

\begin{lemma}
  \label[lemma]{lem:relumoments}
  For any $\param \in \sphere$, and any integer $q \geq 2$,
  \begin{equation*}
    \E \sbr*{ \abs{\bfdelta_{\param}}^q }
    \leq \frac{q!v}{2} b^{q-2}
  \end{equation*}
  where
  \begin{equation*}
    b = C \norm{\param - \truth}
    \quad \text{and} \quad
    v \leq C \norm{\param - \truth}^2 \del*{ \frac1{\invtemp} + \norm{\param - \truth} }
  \end{equation*}
  and $C>0$ is an absolute constant.
\end{lemma}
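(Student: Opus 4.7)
The plan is to decompose $\bfdelta_\param$ by cases on the signs of $\bfz := \bfx^\T\truth$ and $\bfz' := \bfx^\T\param$, and bound each piece via a direct Gaussian computation. Let $\bfu := \bfx^\T(\param - \truth) = \bfz' - \bfz$ and $r := \norm{\param-\truth}$, so $\bfu \sim \Normal(0, r^2)$ and $(\bfz,\bfu)$ is jointly Gaussian with $\E\sbr{\bfz\bfu} = -r^2/2$; concretely $\bfu \mid \bfz \sim \Normal(-r^2\bfz/2,\ \sigma^2)$ with $\sigma^2 = r^2(1-r^2/4)$. Since $\sbr{\cdot}_+$ is $1$-Lipschitz, $|\bfdelta_\param| \le |\bfu|$ always, which gives only $\E|\bfdelta_\param|^q \le r^q\E|\bfg|^q$ for a standard normal $\bfg$; this is too weak by a factor of $1/\invtemp+r$ even at $q=2$. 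The refinement comes from inspecting the four sign patterns: when $\sign(\bfz)=\sign(\bfz')$ one checks directly that $\bfdelta_\param=\pm\bfu$ if $\bfy=-\sign(\bfz)$ and $\bfdelta_\param=0$ otherwise, while when $\sign(\bfz)\ne\sign(\bfz')$ one has $|\bfdelta_\param|\in\set{|\bfz|,|\bfz'|}$ with both quantities bounded by $|\bfu|$. Taking conditional expectations over $\bfy$, whose conditional ``wrong-side'' probability is $g'(-\invtemp|\bfz|)$, yields the clean bound
\begin{equation*}
  \E|\bfdelta_\param|^q \le \E\sbr{|\bfu|^q g'(-\invtemp|\bfz|)} + \E\sbr{|\bfu|^q \ind{\bfz\bfz' < 0}}.
\end{equation*}

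For the first summand, the plan is to use $g'(-t) \le e^{-t}$ for $t\ge 0$ together with the conditional moment bound $\E\sbr{|\bfu|^q \mid \bfz} \le 2^{q-1}((r^2|\bfz|/2)^q + \sigma^q c_q)$ for $c_q := \E|\bfg|^q$, and then invoke the Gaussian integrals $\int_0^\infty z^k e^{-\invtemp z}\phi(z)\,dz \lesssim k!/\invtemp^{k+1}$ of the type collected in \Cref{sec:integrals}. This routine calculation yields a bound of order $C^q c_q r^q/\invtemp + C^q q!\, r^{2q}/\invtemp^{q+1}$, which is the $r^2/\invtemp$ piece of the target variance proxy.

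The main obstacle is the second summand $\E\sbr{|\bfu|^q \ind{\bfz\bfz'<0}}$: a naive Cauchy--Schwarz gives $\sqrt{\E|\bfu|^{2q}}\sqrt{\Pr(\bfz\bfz'<0)} \lesssim r^{q+1/2}$, short by a full factor of $\sqrt r$. My plan is to condition on $\bfz>0$ (doubling by symmetry), parametrize $\bfz' = \rho\bfz + \sigma\bfg$ with $\rho = 1 - r^2/2$ and $\bfg$ an independent standard normal, and exploit the identity $\rho^2 + \sigma^2 = (1-r^2/2)^2 + r^2(1-r^2/4) = 1$ (equivalently $\Var(\bfz')=1$). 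This identity forces the mixed density to factor as $\phi(z)\phi(\rho z/\sigma) = (\sigma/\sqrt{2\pi})\,\psi_\sigma(z)$, where $\psi_\sigma$ is the density of $\Normal(0,\sigma^2)$. After the change of variable $\bfh = -\bfg - \rho\bfz/\sigma$, the identity $(r^2/2+\rho)\bfz = \bfz$ rewrites $|\bfu| = \bfz + \sigma\bfh$ on the conditioning event; bounding $\phi(c+s)\le\sqrt{2\pi}\phi(c)\phi(s)$ for $c,s\ge0$ and integrating, the $\bfz$-integral collapses to a standard $\Normal(0,\sigma^2)$ moment and produces $\E\sbr{|\bfu|^q \ind{\bfz\bfz'<0}} \le 2^{q-1}\sigma^{q+1}c_q \le C^q r^{q+1}c_q$, the desired $r^{q+1}$ piece.

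Combining the two bounds and using Stirling's estimate $c_q = O((q/e)^{q/2}) = O(\sqrt{q!})$, every contribution is absorbed into the Bernstein form $\frac{q!}{2}\,v\,b^{q-2}$ with $v = Cr^2(1/\invtemp + r)$ and $b = Cr$ for a sufficiently large absolute constant $C$; the $c_q/q!$ decay handles the geometric factors $C^{q-2}$ for all $q\ge 3$, while the $q=2$ case reduces to the direct second-moment computation above. The serious work is all in Case B: it is precisely the algebraic miracle $\rho^2+\sigma^2=1$ --- a feature of the normal design --- that lets the Gaussian integral resolve sharply enough to give $r^{q+1}$ rather than the $r^{q+1/2}$ one would settle for by soft arguments.
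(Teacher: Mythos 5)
Your argument is correct and, at its core, follows the same route as the paper's: both proofs split $\E\abs{\bfdelta_\param}^q$ into a contribution from the sign-disagreement event $\set{\sign(\bfx^\T\param)\neq\sign(\bfx^\T\truth)}$, which carries the $\norm{\param-\truth}^{q+1}$ part of $v$, and a contribution from the label-noise event, which is handled exactly as you propose --- via $g'(-t)\le e^{-t}$ together with the tilted Gaussian moments $\E[e^{-\invtemp\abs{\bfz}}\abs{\bfz}^q]\le q!/\invtemp^q$ and $\E[e^{-\invtemp\abs{\bfz}}]\le\sqrt{2/\pi}/\invtemp$ of \Cref{lem:expqmoment} and \Cref{lem:g''} --- and carries the $\norm{\param-\truth}^{q}/\invtemp$ part. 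The differences are in execution. The paper reaches the split through the identity $\sbr{x}_+=(\abs{x}+x)/2$ followed by Jensen (\Cref{lem:kvdgmoments}), whereas you do an exact case analysis on the signs of $\bfz$, $\bfz'$, and $\bfy$, which is a bit cleaner (no top-level $2^{q-1}$). More substantively, the paper imports the sign-disagreement moment bound from Corollary A.2.1 of \citet{kuchelmeister2023finite}, whereas you rederive the analogous bound for your $\bfu=\bfx^\T(\param-\truth)$ from scratch; the identity $\rho^2+\sigma^2=1$ you highlight is precisely $\var(\bfx^\T\param)=1$, the same normal-design fact that drives their computation, and your integral checks out. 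Two small caveats, neither fatal: the inequality $\phi(c+s)\le\sqrt{2\pi}\,\phi(c)\phi(s)$ in your Case B needs $c,s\ge0$, hence $\rho=1-\norm{\param-\truth}^2/2\ge0$; the complementary range $\norm{\param-\truth}>\sqrt2$ is trivial, since there $\E\abs{\bfu}^q\le c_q\norm{\param-\truth}^q\le c_q\norm{\param-\truth}^{q+1}$. And absorbing your term $q!\,\norm{\param-\truth}^{2q}/\invtemp^{q+1}$ into the Bernstein form $q!(C\norm{\param-\truth})^{q-2}\cdot C\norm{\param-\truth}^2/\invtemp$ silently uses $\invtemp\gtrsim1$; the paper's own proof makes the identical move (bounding $2^{q-2}\norm{\param-\truth}^{2q}q!/\invtemp^q$ by $2^{2q-2}\norm{\param-\truth}^qq!/\invtemp$), and the lemma is only invoked under the hypothesis $\invtemp\ge1+c$ of \Cref{thm:moderatetempupper}, so this is a shared implicit assumption rather than a new gap.
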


Given \Cref{lem:relumoments}, the rest of the analysis is mostly standard.
\Cref{lem:relumoments} and Bernstein's inequality implies that each random variable in the empirical process above is concentrated around its expectation.
\begin{lemma}
  \label[lemma]{lem:reluconc}
  For any $\param \in \sphere$, define $\bfdelta_{\param}^i := \sbr{-\bfy_i\bfx_i^\T\param}_+ - \sbr{-\bfy_i\bfx_i^\T\truth}_+$ for all $i=1,\dotsc,n$.
  For any $\delta \in \intoo{0,1}$, with probability at least $1-\delta$, we have
  \begin{equation*}
    \E\sbr*{ \bfdelta_{\param} } - \frac1n \sum_{i=1}^n \bfdelta_{\param}^i
    \leq
    C \norm{\param - \truth} \sqrt{\del*{ \frac1{\invtemp} + \norm{\param - \truth} } \frac{\log(1/\delta)}{n}} + C \norm{\param - \truth} \frac{\log(1/\delta)}{n} ,
  \end{equation*}
  where $C>0$ is an absolute constant.
\end{lemma}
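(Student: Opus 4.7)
The plan is to apply Bernstein's inequality in moment form to the i.i.d.\ sum $(1/n)\sum_{i=1}^n \bfdelta_{\param}^i$, using the moment bounds from \Cref{lem:relumoments}. Since the data pairs $(\bfx_i,\bfy_i)$ are i.i.d., the random variables $\bfdelta_{\param}^i$ are i.i.d.\ copies of $\bfdelta_{\param}$, whose common distribution (by \Cref{lem:relumoments}) satisfies the Bernstein moment condition $\E\sbr{\abs{\bfdelta_{\param}}^q} \leq q! v b^{q-2}/2$ for every integer $q \geq 2$, with $b = C\norm{\param-\truth}$ and $v \leq C\norm{\param-\truth}^2(1/\invtemp + \norm{\param-\truth})$. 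This is precisely the shape of tail control to which a textbook Bernstein-type deviation inequality applies.

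The standard consequence of this moment condition yields, for each fixed $\delta \in \intoo{0,1}$, with probability at least $1-\delta$,
\begin{equation*}
  \E\sbr{\bfdelta_{\param}} - \frac{1}{n}\sum_{i=1}^n \bfdelta_{\param}^i \;\leq\; \sqrt{\frac{2v\log(1/\delta)}{n}} + \frac{b\log(1/\delta)}{n} .
\end{equation*}
Substituting the expressions for $v$ and $b$ from \Cref{lem:relumoments} and absorbing universal constants into $C$ produces exactly the claimed bound (with the first term becoming $C\norm{\param-\truth}\sqrt{(1/\invtemp + \norm{\param-\truth})\log(1/\delta)/n}$ and the second $C\norm{\param-\truth}\log(1/\delta)/n$).

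The one minor subtlety is that Bernstein's inequality is often stated in terms of moments of the centered variable $\bfdelta_{\param} - \E\sbr{\bfdelta_{\param}}$, whereas \Cref{lem:relumoments} controls the uncentered absolute moments of $\bfdelta_{\param}$. This is not a genuine obstacle: the elementary bound $\E\sbr{\abs{X - \E X}^q} \leq 2^q \E\sbr{\abs{X}^q}$ shows that the centered moments satisfy the same Bernstein condition up to a constant factor, which can be absorbed into $b$ and $v$. Consequently, the real content of the lemma is carried by \Cref{lem:relumoments} itself, and the present statement reduces to a direct invocation of a textbook concentration inequality; there is no genuinely difficult step here.
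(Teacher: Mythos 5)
Your proposal is correct and matches the paper's (implicit) argument exactly: the paper proves this lemma by combining \Cref{lem:relumoments} with Bernstein's inequality in moment form, which is precisely what you do. Your remark on passing from uncentered to centered moments via $\E\sbr{\abs{X-\E X}^q} \leq 2^q \E\sbr{\abs{X}^q}$ is a valid and worthwhile detail that the paper leaves unstated.
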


To bound all random variables in the stochastic process simultaneously, we use a covering argument, again following \citet{kuchelmeister2023finite}.
\begin{lemma}[\citealp{kuchelmeister2023finite}, Lemma 5.1.2]
  \label[lemma]{lem:covering}
  For any $\varepsilon_0\in\intoo{0,1}$ and any $A \subseteq \sphere$, let $T(A,\varepsilon_0)$ be an $\varepsilon_0$-cover of $A$ with respect to $\ell_2$ distance.
  Define $\bfdelta_{\param}^i := \sbr{-\bfy_i\bfx_i^\T\param}_+ - \sbr{-\bfy_i\bfx_i^\T\truth}_+$ for all $i=1,\dotsc,n$ and all $\param \in A$.
  For any $\delta \in \intoo{0,1}$, with probability at least $1-\delta$, we have
  \begin{equation*}
    \sup_{\param \in A}
    \cbr*{ \E\sbr*{ \bfdelta_{\param} } - \frac1n \sum_{i=1}^n \bfdelta_{\param}^i }
    \leq
    \max_{\param \in T(A,\varepsilon_0)}
    \cbr*{ \E\sbr*{ \bfdelta_{\param} } - \frac1n \sum_{i=1}^n \bfdelta_{\param}^i }
    + \varepsilon_0 \del*{ \sqrt{\frac{2\ln(1/\delta)}{n}} + 2\sqrt{d} } .
  \end{equation*}
\end{lemma}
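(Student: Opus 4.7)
The plan is to reduce the supremum over the (possibly uncountable) set $A$ to a maximum over its finite $\varepsilon_0$-cover by a standard discretization argument, the only wrinkle being that $\bfdelta_\param$ depends linearly on $\param$ through $\bfx^\T\param$, so the ``slack'' incurred is controlled by the Euclidean norm of the Gaussian covariates. For each $\param \in A$, I would fix a nearest anchor $\param^\star \in T(A,\varepsilon_0)$ with $\norm{\param - \param^\star} \leq \varepsilon_0$, and write
\begin{equation*}
  \E[\bfdelta_\param] - \frac1n\sum_{i=1}^n \bfdelta_\param^i
  \;=\; \underbrace{\E[\bfdelta_{\param^\star}] - \frac1n\sum_{i=1}^n \bfdelta_{\param^\star}^i}_{\text{handled by the max over the cover}}
  + \E[\bfdelta_\param - \bfdelta_{\param^\star}]
  - \frac1n \sum_{i=1}^n (\bfdelta_\param^i - \bfdelta_{\param^\star}^i).
\end{equation*}

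Next, I would use the fact that $t \mapsto [t]_+$ is $1$-Lipschitz, so $\abs{\bfdelta_\param - \bfdelta_{\param^\star}} = \bigl\lvert [-\bfy\bfx^\T\param]_+ - [-\bfy\bfx^\T\param^\star]_+\bigr\rvert \leq \abs{\bfx^\T(\param-\param^\star)} \leq \varepsilon_0 \norm{\bfx}$, and similarly for each $\bfdelta_\param^i - \bfdelta_{\param^\star}^i$. This immediately gives the deterministic bound $\E[\bfdelta_\param - \bfdelta_{\param^\star}] \leq \varepsilon_0 \E\norm{\bfx} \leq \varepsilon_0\sqrt{d}$ (by Jensen), accounting for one of the $\sqrt d$ contributions, and reduces the empirical slack term to controlling $\tfrac1n \sum_i \norm{\bfx_i}$ uniformly.

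For the empirical side, the key observation is that $f(\bfx_1,\dotsc,\bfx_n) := \tfrac1n \sum_{i=1}^n \norm{\bfx_i}$, viewed as a function on $\R^{nd}$, is Lipschitz with constant $1/\sqrt{n}$ (by Cauchy--Schwarz applied to the sum of single-vector Lipschitz differences). Hence Gaussian concentration for Lipschitz functions yields
\begin{equation*}
  \Pr\!\left( f \geq \E[f] + t \right) \leq e^{-nt^2/2},
\end{equation*}
and with $t = \sqrt{2\ln(1/\delta)/n}$ and $\E[f] \leq \sqrt d$, we get $\tfrac1n\sum_i\norm{\bfx_i} \leq \sqrt d + \sqrt{2\ln(1/\delta)/n}$ with probability $\geq 1-\delta$. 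Multiplying by $\varepsilon_0$ and combining with the deterministic $\varepsilon_0\sqrt d$ bound on the expectation gap yields the advertised slack $\varepsilon_0(\sqrt{2\ln(1/\delta)/n} + 2\sqrt{d})$. Since this argument applies simultaneously for every $\param \in A$ (the Gaussian event does not depend on $\param$), taking the supremum on the left gives the claim.

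The main subtlety, and the only nontrivial step, is the concentration of $\tfrac1n \sum_i \norm{\bfx_i}$: naive concatenation of per-$\bfx_i$ concentration would give an $O(\sqrt{\ln(n/\delta)})$ rather than $O(\sqrt{\ln(1/\delta)/n})$ deviation, which is too weak. Exploiting the joint Lipschitz structure in $\R^{nd}$ via Gaussian concentration is what gives the $1/\sqrt n$ factor required to match the stated bound; everything else is bookkeeping on the $1$-Lipschitz property of the ReLU.
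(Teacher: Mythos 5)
Your proof is correct. The paper does not reprove this lemma---it is imported verbatim from \citet{kuchelmeister2023finite}---and your argument (cancellation of the $\truth$-terms, $1$-Lipschitzness of $\sbr{\cdot}_+$ with Cauchy--Schwarz to get the $\varepsilon_0\norm{\bfx}$ slack, Jensen for the expectation side, and Borell--TIS concentration of the $1/\sqrt{n}$-Lipschitz function $\frac1n\sum_i\norm{\bfx_i}$ for the empirical side) is exactly the standard route taken in that source, including the key observation that joint Gaussian concentration in $\R^{nd}$ is what delivers the $\sqrt{2\ln(1/\delta)/n}$ rate.
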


To relate $\E\sbr{\bfdelta_{\param}}$ to $\norm{\param - \truth}$, we use the following \namecref{lem:paramdiff}.
\begin{lemma}
  \label[lemma]{lem:paramdiff}
  For any $\param \in \sphere$, we have
  \begin{equation*}
    \E\sbr*{ \bfdelta_{\param} }
    \geq \frac18\sqrt{\frac2\pi} \del*{1 - \frac1{\invtemp^2}} \norm{ \param - \truth }^2 .
  \end{equation*}
\end{lemma}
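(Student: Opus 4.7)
My plan is to compute $\E\sbr{\bfdelta_\param}$ in essentially closed form by first marginalizing $\bfy$ given $\bfx$, then integrating out the directions orthogonal to $\truth$, reducing the bound to a one-dimensional Gaussian integral that I would then control via Stein's lemma and Mills' ratio.

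First I would condition on $\bfx$ and use the identity $g'(\eta) - g'(-\eta) = \tanh(\eta/2)$ together with $[a]_+ = (|a|+a)/2$ and $[-a]_+ = (|a|-a)/2$ to obtain, for any $a\in\R$,
\[
\E_\bfy\sbr*{[-\bfy a]_+ \mid \bfx} \;=\; \tfrac{1}{2}|a| \,-\, \tfrac{a}{2}\tanh(\invtemp \bfz/2),
\]
where $\bfz := \bfx^\T\truth$. Applying this once with $a = \bfz' := \bfx^\T\param$ and once with $a = \bfz$ and subtracting, the $|a|/2$ terms have equal expectation (both $\bfz$ and $\bfz'$ are marginally $\Normal(0,1)$) and cancel under $\E_\bfx$, leaving
\[
\E\sbr{\bfdelta_\param} \;=\; \tfrac{1}{2}\,\E\sbr*{(\bfz - \bfz')\tanh(\invtemp\bfz/2)}.
\]

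Next I would exploit joint normality of $(\bfz,\bfz')$ via the decomposition $\bfz' = \rho\bfz + \sqrt{1-\rho^2}\,\bfw$ with $\rho = \param^\T\truth$ and $\bfw \sim \Normal(0,1)$ independent of $\bfz$; the $\bfw$ contribution vanishes in expectation, and $1-\rho = \tfrac{1}{2}\norm{\param-\truth}^2$ yields
\[
\E\sbr{\bfdelta_\param} \;=\; \tfrac{1}{4}\norm{\param-\truth}^2 \cdot F(\invtemp), \qquad F(\invtemp) := \E\sbr{\bfz\tanh(\invtemp\bfz/2)}.
\]
It thus suffices to show $F(\invtemp) \geq \tfrac{1}{2}\sqrt{2/\pi}\,(1 - \invtemp^{-2})$. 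The case $\invtemp \leq 1$ is immediate because $F(\invtemp) \geq 0$, so I would assume $\invtemp > 1$. Gaussian integration by parts applied to $f(z) = \tanh(\invtemp z/2)$ gives $F(\invtemp) = \tfrac{\invtemp}{2}\E\sbr{1 - \tanh^2(\invtemp\bfz/2)}$; the pointwise bound $1 - \tanh^2(u) = 4/(e^u + e^{-u})^2 \geq e^{-2|u|}$ together with $\E\sbr{e^{-\invtemp|\bfz|}} = 2 e^{\invtemp^2/2}(1 - \Phi(\invtemp))$ produces $F(\invtemp) \geq \invtemp\, e^{\invtemp^2/2}(1 - \Phi(\invtemp))$; and the second-order Mills' lower bound $1 - \Phi(\invtemp) \geq \phi(\invtemp)\,(\invtemp^{-1} - \invtemp^{-3})$ combines with $e^{\invtemp^2/2}\phi(\invtemp) = (2\pi)^{-1/2}$ to give exactly $\tfrac{1}{2}\sqrt{2/\pi}\,(1 - \invtemp^{-2})$.

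The delicate point will be that the $(1 - \invtemp^{-2})$ factor in the statement is not cosmetic: it arises precisely from the second-order correction in Mills' asymptotic expansion, and producing it requires both the second-order Mills bound and the tight pointwise bound on $1 - \tanh^2$. Using only the leading-order Mills inequality $1 - \Phi(\invtemp) \lesssim \phi(\invtemp)/\invtemp$, or a looser tail bound on $1 - \tanh^2$, would either drop the correction or lose a constant. A direct evaluation of $\E\sbr{1 - \tanh^2(\invtemp\bfz/2)}$ does not appear to lead anywhere cleaner, so the main insight I would rely on is recognizing that this matched pair of pointwise and Mills' bounds is exactly what delivers the stated sharp constants.
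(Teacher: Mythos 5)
Your proof is correct and is essentially the paper's argument in a more self-contained form: the identity $\E\sbr{\bfdelta_{\param}}=\tfrac{1-\rho}{2}\E\sbr{\bfz\tanh(\invtemp\bfz/2)}=\invtemp(1-\rho)\E\sbr{g''(\invtemp\bfz)}$ is exactly what the paper obtains by passing through the excess-logistic-loss/KL identity and \Cref{lem:klbound}, and your chain of Stein's identity, the pointwise bound $1-\tanh^2(u)\geq e^{-2\abs{u}}$ (equivalently $g''(\eta)\geq\tfrac14 e^{-\abs{\eta}}$), and the second-order Mills bound reproduces \Cref{lem:g''} and \Cref{lem:normaltail} verbatim. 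The only presentational difference is that you marginalize $\bfy$ directly, whereas the paper isolates the same label-dependent term via the decomposition of the logistic loss into its ReLU and label-independent parts; both cancellations rest on $\bfx^\T\param$ and $\bfx^\T\truth$ being identically distributed.
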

\begin{proof}
  The expected KL divergence of $\Bernoulli(g'(\invtemp\bfx^\T\param))$ from $\Bernoulli(g'(\invtemp\bfx^\T\truth))$ is the expected excess logistic loss of $w := \invtemp\param$ compared to $w^\star := \invtemp\truth$:
  \begin{align*}
    \E\sbr*{\KL(\Bernoulli(g'(\invtemp\bfx^\T\param)) \| \Bernoulli(g'(\invtemp\bfx^\T\truth)))}
    & = \E\sbr*{\ln(1 + e^{-\bfy \bfx^\T w})}
    - \E\sbr*{\ln(1 + e^{-\bfy \bfx^\T w^\star})}
  \end{align*}
  (recalling that the conditional distribution of $\bfy$ given $\bfx=x$ is $\Bernoulli(g'(x^\T w^\star))$).
  Furthermore, the logistic loss $w \mapsto \ln(1 + e^{-\bfy\bfx^\T w})$ decomposes into the sum of a label-dependent part and a label-independent part:
  \begin{equation*}
    \ln(1 + e^{-\bfy\bfx^\T w})
    = \sbr*{ -\bfy\bfx^\T w }_+ + \ln(1 + e^{-\abs{\bfx^\T w}}) .
  \end{equation*}
  (This decomposition is the same as that from \citet{kuchelmeister2023finite}, and a similar decomposition was used by \citet{bach2010self}.)
  We can thus write the excess expected logistic loss as
  \begin{align*}
    \E\sbr*{\ln(1 + e^{-\bfy \bfx^\T w})}
    - \E\sbr*{\ln(1 + e^{-\bfy \bfx^\T w^\star})}
    & =
    \E\sbr*{ \sbr*{ -\bfy\bfx^\T w }_+ } + \E\sbr*{ \ln(1 + e^{-\abs{\bfx^\T w}}) }
    \\
    & \qquad
    - \E\sbr*{ \sbr*{ -\bfy\bfx^\T w^\star }_+ } - \E\sbr*{ \ln(1 + e^{-\abs{\bfx^\T w^\star}}) }
    \\
    & = 
    \E\sbr*{ \sbr*{ -\bfy\bfx^\T w }_+ - \sbr*{ -\bfy\bfx^\T w^\star }_+ }
  \end{align*}
  since $\bfx^\T w$ and $\bfx^\T w^\star$ have the same distribution.
  Therefore, we have
  \begin{equation*}
    \E\sbr*{ \sbr*{ -\bfy\bfx^\T\param }_+ - \sbr*{ -\bfy\bfx^\T\truth }_+ }
    = \frac1\invtemp 
    \E\sbr*{\KL(\Bernoulli(g'(\invtemp\bfx^\T\param)) \| \Bernoulli(g'(\invtemp\bfx^\T\truth)))}
    .
  \end{equation*}
  The claim now follows by applying \Cref{lem:klbound} (see \Cref{sec:integrals}).
\end{proof}

\begin{proof}[Proof of \Cref{thm:moderatetempupper}]
  Fix some $\varepsilon_0 , r_0 \in \intoo{0,1}$ and let $r_j = 2^j r_0$ for $1 \leq j \leq J := \ceil{\log_2(2/r_0)}$.
  Define $A_0 = \set{ \param \in \sphere : \norm{\param - \truth} \leq r_0 }$ and $A_j = \set{ \param \in \sphere : r_{j-1} < \norm{\param - \truth} \leq r_j }$ for all $j \geq 1$.
  Let $T_j = T(A_j,\varepsilon_0(r_j/2)^{3/2})$ be an $\varepsilon_0(r_j/2)^{3/2}$-cover of $A_j$ with respect to $\ell_2$ distance of cardinality at most $(2(2/r_j)^{3/2}\varepsilon_0^{-1})^{d-1}$ (cf.~\Cref{lem:packing}).
  Fix $\delta_0 \in \intoo{0,1/(J+\sum_{j=0}^J \card{T_j})}$, and apply \Cref{lem:covering} to each $A_j$ for $1 \leq j \leq J$, and \Cref{lem:reluconc} to each $\param \in T$.
  By a union bound, with probability at least $1 - (J+\sum_{j=0}^J \card{T_j})\delta_0$, we have for all $\param \in \sphere$,
  \begin{align*}
    \E\sbr*{ \bfdelta_{\param} }
    & \leq \frac1n \sum_{i=1}^n \bfdelta_{\param}^i
    + 2C' \Delta_{r_0}(\param) \sqrt{\del*{ \frac1{\invtemp} + \Delta_{r_0}(\param) } \frac{\log(1/\delta_0)}{n}} + 2C' \Delta_{r_0}(\param) \frac{\log(1/\delta_0)}{n}
    \\
    & \qquad + \varepsilon_0 \Delta_{r_0}(\param)^{3/2} \del*{ \sqrt{\frac{2\ln(1/\delta_0)}{n}} + 2\sqrt{d} }
  \end{align*}
  where $\Delta_{r_0}(\param) := \max\set{ r_0, \norm{\param - \truth} }$, $C'>0$ is some absolute constant, and $\bfdelta_{\param}^i$ are as in \Cref{lem:reluconc}.
  We condition on the event that these inequalities hold for all $\param \in \sphere$.
  Since $\reluest$ minimizes $\frac1n \sum_{i=1}^n \sbr{-\bfy_i\bfx_i^\T\param}_+$, it follows that $\frac1n \sum_{i=1}^n \bfdelta_{\reluest}^i \leq 0$.
  Therefore, together with \Cref{lem:paramdiff} to lower-bound $\E\sbr*{ \bfdelta_{\param} }$ for $\param = \reluest$, we have
  \begin{multline*}
    \frac18\sqrt{\frac2\pi} \del*{1 - \frac1{\invtemp^2}} \norm{ \reluest - \truth }^2
    \leq
    2C' \Delta_{r_0}(\reluest) \sqrt{\frac{\log(1/\delta_0)}{\invtemp n}}
    + 2C' \Delta_{r_0}(\reluest)^{3/2} \sqrt{\frac{\log(1/\delta_0)}{n}}
    \\
    + 2C' \Delta_{r_0}(\reluest) \frac{\log(1/\delta_0)}{n}
    + \varepsilon_0 \Delta_{r_0}(\reluest)^{3/2} \del*{ \sqrt{\frac{2\ln(1/\delta_0)}{n}} + 2\sqrt{d} } .
  \end{multline*}
  Let $r_0 = \epsilon$, and suppose that $\norm{\reluest - \truth} > \epsilon$, in which case we have $\Delta_{r_0}(\reluest) = \norm{\reluest - \truth}$.
  Then, the display above has $\norm{\reluest - \truth}$ on both sides of the inequality, and it simplifies to an inequality in $x = \norm{\reluest - \truth}^{1/2}$ of the form $x - b\sqrt{x} - c \leq 0$ for some $b, c \geq 0$, which in turn implies $x \leq 1.5(b^2 + c)$.
  Therefore
  \begin{equation*}
    \norm{\reluest - \truth}
    \leq 
    C'' \del*{
      \varepsilon_0^2 \del*{ \frac{\log(1/\delta_0)}{n} + d }
      + \sqrt{\frac{\log(1/\delta_0)}{\invtemp n}}
      + \frac{\log(1/\delta_0)}{n}
    }
  \end{equation*}
  for some absolute constant $C''>0$.
  (Recall that we have assumed $\invtemp \geq 1 + c$ for some absolute constant $c>0$, and hence $1 - 1/\invtemp^2 \geq c(2+c)/(1+c)^2$.)
  Plug-in $\varepsilon_0 = \sqrt{\epsilon/(4C''d)}$, $\delta_0 = \delta/(J + \sum_{j=0}^J \card{T_j})$, and
  \begin{equation*}
    n \geq \frac{16\log(1/\delta_0)}{\invtemp \epsilon^2} + \frac{4C'' \log(1/\delta_0)}{\epsilon}
  \end{equation*}
  to conclude that $\norm{\reluest - \truth} \leq \epsilon$.
  The lower bound $n$ comes from the assumption in the theorem statement, and the fact that $J = O(\log(1/\epsilon))$ and $\sum_{j=0}^J \card{T_j} = O(d/\epsilon)^{O(d)}$.
\end{proof}

\subsection{Low temperatures}
\label{sec:lowtempupper}

The objective function minimized by $\reluest$ in \Cref{eq:reluest} uses the magnitude of the inner product $\bfx_i^\T\param$ whenever its sign differs from that of $\bfy_i$.
At sufficiently low temperatures ($\invtemp \gtrsim 1/\epsilon$), this magnitude information can be safely ignored.
Specifically, we show that the empirical (zero-one loss) risk minimizer
\begin{equation*}
  \erm((\bfx_i,\bfy_i)_{i=1}^n)
  \in \argmin_{\param \in \sphere}
  \frac1n \sum_{i=1}^n \ind{\bfy_i\bfx_i^\T\param \leq 0}
\end{equation*}
achieves near-optimal sample complexity in this regime.

\begin{theorem}
  \label[theorem]{thm:lowtempupper}
  Fix any $\epsilon, \delta \in \intoo{0,1}$, and assume
  \begin{equation*}
    \invtemp \geq \frac{4\sqrt{2\pi}}{\epsilon}
  \end{equation*}
  and
  \begin{equation*}
    n \geq \frac{C(d\log(1/\epsilon) + \log(1/\delta))}{\epsilon}
  \end{equation*}
  where $C>0$ is an absolute constant.
  Then with probability at least $1-\delta$, we have
  \begin{equation*}
    \norm{\erm((\bfx_i,\bfy_i)_{i=1}^n) - \truth} \leq \epsilon .
  \end{equation*}
\end{theorem}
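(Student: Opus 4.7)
}
The overall strategy is a classical empirical-risk-minimization argument for a VC class, exploiting the fact that at the assumed very low temperature the Bayes classifier $h_{\truth}$ has error $O(\epsilon)$, which puts us in the fast-rate regime $n \asymp d/\epsilon$ rather than the slow-rate $d/\epsilon^2$. First, I would record two deterministic facts about the population zero-one risk $\err_{\truth}(\param) = \Pr(\bfy\bfx^\T\param \leq 0)$. By \Cref{lem:opt} and $\invtemp \geq 4\sqrt{2\pi}/\epsilon$,
\begin{equation*}
  \err_{\truth}(\truth) \;\leq\; \frac{1}{\invtemp}\sqrt{\tfrac{2}{\pi}} \;\leq\; \frac{\epsilon}{4\pi}.
\end{equation*}
Second, decomposing on the event $\{h_\param(\bfx)\neq h_{\truth}(\bfx)\}$ and using $\Pr(A\cap B)\geq \Pr(A)-\Pr(B^c)$, one obtains
\begin{equation*}
  \err_{\truth}(\param) - \err_{\truth}(\truth) \;\geq\; \Pr(h_\param(\bfx)\neq h_{\truth}(\bfx)) - 2\err_{\truth}(\truth).
\end{equation*}
Since $\bfx$ is standard normal, $\Pr(h_\param(\bfx)\neq h_{\truth}(\bfx)) = \arccos(\param^\T\truth)/\pi = 2\arcsin(\norm{\param-\truth}/2)/\pi \geq \norm{\param-\truth}/\pi$. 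Combining with the first bound, any $\param$ with $\norm{\param-\truth}>\epsilon$ satisfies $\err_{\truth}(\param) - \err_{\truth}(\truth) \geq \epsilon/\pi - \epsilon/(2\pi) = \epsilon/(2\pi)$. Hence it suffices to show that $\erm$ has excess risk strictly less than $\epsilon/(2\pi)$.

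To achieve this with the claimed sample size I would use a sharp uniform-convergence bound exploiting the small-variance structure. For each $\param$ set $Z_\param = \ind{h_\param(\bfx)\neq\bfy} - \ind{h_{\truth}(\bfx)\neq\bfy} \in \{-1,0,1\}$, so that $\E[Z_\param]$ is the excess risk and $\var(Z_\param) \leq \E[Z_\param^2] = \Pr(h_\param\neq h_{\truth}) \leq \norm{\param-\truth}/2$. Partitioning $\sphere\setminus\{\param:\norm{\param-\truth}\leq\epsilon\}$ into dyadic shells $A_j = \{\param: 2^{j-1}\epsilon < \norm{\param-\truth}\leq 2^j\epsilon\}$ for $j=1,\dotsc,O(\log(1/\epsilon))$, each $\param\in A_j$ has $\E[Z_\param]\gtrsim 2^j\epsilon$ and $\var(Z_\param)\lesssim 2^j\epsilon$, so Bernstein's inequality gives
\begin{equation*}
  \Pr\!\del*{\tfrac{1}{n}\textstyle\sum_{i=1}^n Z_{\param,i} \leq 0} \;\leq\; \exp(-c n \cdot 2^j\epsilon)
\end{equation*}
for an absolute constant $c>0$. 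Because the labels $(h_\param(\bfx_1),\dotsc,h_\param(\bfx_n))$ range over at most $2(ne/(d-1))^{d-1}$ distinct values as $\param$ varies (\Cref{lem:shatter}), union-bounding over one representative per equivalence class in each shell, then over shells, yields a failure probability at most
\begin{equation*}
  O(\log(1/\epsilon)) \cdot 2\del*{\tfrac{ne}{d-1}}^{d-1} \exp(-cn\epsilon),
\end{equation*}
which is $\leq\delta$ provided $n\epsilon \geq C(d\log(n/d) + \log(\log(1/\epsilon)/\delta))$; absorbing $\log n$ into $\log(1/\epsilon)$ gives the stated sample size. On this event $\erm$ cannot lie in any shell $A_j$, so $\norm{\erm-\truth}\leq\epsilon$.

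\paragraph{Main obstacle.} The crux is obtaining the fast rate $d/\epsilon$ rather than the slow rate $d/\epsilon^2$: a Hoeffding-type uniform bound for the bounded increments $Z_\param$ would waste the small-variance information $\var(Z_\param)\lesssim\norm{\param-\truth}$, which is a Gaussian small-ball property of the disagreement region. The careful use of Bernstein together with dyadic peeling across the scales of $\norm{\param-\truth}$ is what converts the variance bound into the sharp dependence on $\epsilon$; equivalently, one could invoke Vapnik's relative (multiplicative) Chernoff inequality for VC classes and solve the resulting quadratic in $\sqrt{\err_{\truth}(\erm)}$, which yields the same conclusion but hides the shell structure. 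The rest of the argument is routine, and the only mild annoyance is converting the implicit $\log n$ in the union bound into the stated $\log(1/\epsilon)$ via the usual self-bounding trick.
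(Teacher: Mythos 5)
Your proposal is correct and follows essentially the same route as the paper: the reduction from $\norm{\param-\truth}$ to excess zero-one risk via $\err_{\truth}(\param)-\err_{\truth}(\truth) \geq \Pr(h_\param(\bfx)\neq h_{\truth}(\bfx)) - 2\err_{\truth}(\truth)$ together with $\Pr(h_\param(\bfx)\neq h_{\truth}(\bfx))\geq\norm{\param-\truth}/\pi$ is exactly the content of \Cref{lem:excesserr}, the bound $\opt\leq\epsilon/(4\pi)$ is \Cref{lem:opt}, and the fast-rate ERM guarantee is \Cref{lem:erm}. The only difference is that you sketch a direct Bernstein-plus-peeling proof of the ERM step, whose union bound over data-dependent shattering equivalence classes would need the usual symmetrization/ghost-sample argument to be made rigorous; your stated fallback of invoking Vapnik's relative deviation inequality is precisely what the paper does by citing \citet{vapnik1971uniform} as a black box.
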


Note that the sample size requirement given in \Cref{thm:lowtempupper} removes a $\log(d)$ factor from that in \Cref{thm:moderatetempupper} in the low temperature regime.

The proof of \Cref{thm:lowtempupper} is largely based on the following standard performance guarantee for empirical risk minimization, combined with the fact that the VC dimension of the class of homogeneous linear classifiers is $d$.\footnote{%
  Note that \Cref{lem:erm} is true under every distribution for $(\bfx,\bfy)$; it does not rely on specific properties of our model.
  In the special case of where $\bfx$ is the $d$-dimensional standard normal (or any other spherically symmetric distribution) and $\invtemp = \infty$ (which implies $\opt = 0$), the $\log(1/\varepsilon)$ can be removed~\citep{long1995sample}.
  So, in this case, the sample size requirement is $C(d + \log(1/\delta))/\varepsilon$.%
}

\begin{lemma}[\citealp{vapnik1971uniform}]
  \label[lemma]{lem:erm}
  There is a universal constant $C>0$ such that the following holds.
  Let $\opt = \min_{\param \in \sphere} \err_{\truth}(\param)$.
  For any $\varepsilon \in \intoo{0,1}$ and $\delta \in \intoo{0,1}$, if
  \begin{equation*}
    n \geq C \del*{ \frac{d\log(1/\varepsilon) + \log(1/\delta)}{\varepsilon^2} \opt + \frac{d\log(1/\varepsilon) + \log(1/\delta)}{\varepsilon} } ,
  \end{equation*}
  then the empirical risk minimizer $\erm$ satisfies
  \begin{equation*}
    \err_{\truth}(\erm) - \opt \leq \varepsilon
  \end{equation*}
  with probability at least $1-\delta$ over the realization of the data.
\end{lemma}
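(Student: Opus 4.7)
The plan is to prove Lemma~\ref{lem:erm} via the classical Vapnik--Chervonenkis argument: combine uniform convergence over the VC class of homogeneous linear classifiers (via \Cref{lem:shatter}) with a Bernstein-type, variance-sensitive tail bound to obtain the fast multiplicative rate. For readability, write $\err(\param) := \err_\truth(\param) = \Pr(\bfy\bfx^\T\param \leq 0)$ for the population $0$--$1$ risk under $P_\truth$, and $\hat\err(\param) := (1/n) \sum_{i=1}^n \ind{\bfy_i \bfx_i^\T\param \leq 0}$ for its empirical counterpart.

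First I would invoke a symmetrization (ghost-sample) argument to bound the supremum of $\err(\param) - \hat\err(\param)$ over $\param \in \sphere$ by a supremum over the restriction of $\set{h_\param : \param \in \sphere}$ to a $2n$-point sample. By \Cref{lem:shatter}, that restriction has cardinality at most $2(2ne/(d-1))^{d-1}$, turning the supremum into a finite union bound. Next, I would apply Bernstein's inequality to each fixed $\param$. Since the loss $\ind{\bfy\bfx^\T\param \leq 0}$ is $\set{0,1}$-valued, its variance is at most its mean $\err(\param)$, so Bernstein gives a tail of the form $\exp(-nt^2/(2\err(\param) + 2t/3))$. Combined (through a peeling argument over shells $\set{\param : \err(\param) \in (2^{j-1}, 2^j]}$, or equivalently via the Vapnik--Chervonenkis relative-deviation inequality applied directly) with the above growth-function bound, this yields, with probability at least $1-\delta/2$ and uniformly over $\param \in \sphere$,
\begin{equation*}
  \err(\param) - \hat\err(\param) \leq c_1 \sqrt{\err(\param)\,\gamma_n} + c_2 \gamma_n,
  \qquad
  \gamma_n := \frac{d \log(n/d) + \log(1/\delta)}{n}.
\end{equation*}

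A single (un-unioned) application of Bernstein to a fixed population minimizer $\bar\param \in \argmin_{\param \in \sphere} \err(\param)$ gives $\hat\err(\bar\param) - \opt \leq c_1 \sqrt{\opt\,\gamma_n} + c_2 \gamma_n$ with probability at least $1 - \delta/2$. Chaining these via $\hat\err(\erm) \leq \hat\err(\bar\param)$ yields
\begin{equation*}
  \err(\erm) - \opt \leq c_1 \sqrt{\err(\erm)\,\gamma_n} + c_1 \sqrt{\opt\,\gamma_n} + 2 c_2 \gamma_n,
\end{equation*}
a quadratic in $\sqrt{\err(\erm)}$. Solving and using AM--GM on the cross term gives $\err(\erm) - \opt \leq C_1 \sqrt{\opt\,\gamma_n} + C_2 \gamma_n$. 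Demanding each summand to be at most $\varepsilon/2$ forces $\gamma_n \leq \min\set{\varepsilon^2/(4 C_1^2 \opt),\, \varepsilon/(2C_2)}$, which rearranges to the two-term sample complexity in the lemma. The $\log(n/d)$ factor in $\gamma_n$ is replaced by $\log(1/\varepsilon)$ because the assumed $n$ makes $n/d$ polynomial in $1/\varepsilon$ and $\log(1/\delta)$, and any such polylog is absorbed into the leading constant $C$.

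The main obstacle is securing the variance-adapted fast rate: a Hoeffding-based uniform bound would only yield $\err(\param) - \hat\err(\param) \lesssim \sqrt{\gamma_n}$, and hence the pessimistic $d/\varepsilon^2$ sample complexity even in the realizable case $\opt = 0$. The delicate step is propagating Bernstein's variance-dependent exponent through the union bound over the growth function, since the exponent depends on the (unknown) $\err(\param)$. The cleanest way is the peeling argument (stratify $\sphere$ into $O(\log n)$ shells according to $\err(\param)$, apply uniform Bernstein within each shell, then union bound over shells), or, equivalently, a direct appeal to the Vapnik--Chervonenkis 1971 relative-deviation inequality, which packages exactly this trade-off.
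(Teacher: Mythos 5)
The paper does not prove this lemma itself---it is stated as a citation to \citet{vapnik1971uniform}---and your sketch is precisely the standard argument behind that citation (symmetrization, the growth-function bound, a relative-deviation/Bernstein inequality with peeling, and the quadratic inversion), so it is consistent with the paper's intent and essentially correct. The only point worth tightening is the final conversion of $\log(n/d)$ to $\log(1/\varepsilon)$: when $\log(1/\delta) \gg d$ the term $d\log(n/d)$ picks up a $d\log(\log(1/\delta)/d)$ contribution, which is absorbed by the $\log(1/\delta)$ term via $\log u \leq u$ rather than by the leading constant alone, so a one-line inversion argument should be spelled out there.
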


The following \namecref{lem:excesserr} (proved in \Cref{sec:excesserrproof}) relates $\norm{\erm - \truth}$ to $\err_{\truth}(\erm) - \err_{\truth}(\truth)$.
\begin{lemma}
  \label[lemma]{lem:excesserr}
  For any $\param \in \sphere$,
  \begin{equation*}
    \norm{\param - \truth}
    \leq
    \pi \del*{ \err_{\truth}(\param) - \err_{\truth}(\truth) } + \frac{2\sqrt{2\pi}}{\invtemp}
    .
  \end{equation*}
\end{lemma}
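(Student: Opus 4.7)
}

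The plan is to relate the parameter distance $\norm{\param - \truth}$ to the disagreement probability of the associated halfspaces, and then relate that disagreement probability to the excess classification error via the Bayes error from \Cref{lem:opt}. First, I would observe that for unit vectors $\param, \truth \in \sphere$ with angle $\theta \in \intcc{0,\pi}$ between them, we have $\norm{\param - \truth} = 2\sin(\theta/2) \leq \theta$. Moreover, by the rotational symmetry of the standard normal distribution, the probability that the two homogeneous linear classifiers disagree on $\bfx$ is exactly
\begin{equation*}
  \Pr\del{h_\param(\bfx) \neq h_{\truth}(\bfx)} \;=\; \frac{\theta}{\pi} \;\geq\; \frac{\norm{\param - \truth}}{\pi} .
\end{equation*}

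Second, I would decompose the excess error using the disagreement event $D = \set{h_\param(\bfx) \neq h_{\truth}(\bfx)}$. On $D^c$, the classifiers agree, so they contribute the same amount to the error. On $D$, one is right exactly when the other is wrong, so the event $\set{\bfy \neq h_\param(\bfx)} \cap D$ is the complement within $D$ of $\set{\bfy \neq h_{\truth}(\bfx)} \cap D$. A direct computation then gives
\begin{equation*}
  \err_{\truth}(\param) - \err_{\truth}(\truth)
  \;=\; \Pr(D) - 2\Pr\del{\bfy \neq h_{\truth}(\bfx),\, D}
  \;\geq\; \Pr(D) - 2\,\err_{\truth}(\truth) .
\end{equation*}

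Finally, I would plug in $\Pr(D) = \theta/\pi \geq \norm{\param - \truth}/\pi$ together with the bound $\err_{\truth}(\truth) \leq \invtemp^{-1}\sqrt{2/\pi}$ from \Cref{lem:opt}, rearrange, and multiply through by $\pi$ to obtain
\begin{equation*}
  \norm{\param - \truth}
  \;\leq\; \pi\del{\err_{\truth}(\param) - \err_{\truth}(\truth)} + \frac{2\sqrt{2\pi}}{\invtemp} ,
\end{equation*}
as required. There is no real obstacle here: each ingredient (angle/chord comparison, the $\arccos$-formula for disagreement under normal covariates, and the Bayes-error bound) is either elementary or already established earlier, so the main care is bookkeeping in the decomposition step to ensure the factor of $2$ and the direction of the inequality come out correctly.
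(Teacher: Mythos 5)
Your proposal is correct and follows essentially the same route as the paper: the identity $\err_{\truth}(\param)-\err_{\truth}(\truth)=\Pr(D)-2\Pr(\bfy\neq h_{\truth}(\bfx),D)$ is exactly the paper's decomposition (written there in terms of $\E[g'(-\invtemp\abs{\bfx^\T\truth})\ind{D}]$), and the remaining ingredients---bounding that term by $2\err_{\truth}(\truth)\leq 2\invtemp^{-1}\sqrt{2/\pi}$ via \Cref{lem:opt}/\Cref{lem:g'abs} and the chord-versus-arc bound $\Pr(D)=\arccos(\param^\T\truth)/\pi\geq\norm{\param-\truth}/\pi$---are identical. No gaps.
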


\begin{proof}[Proof of \Cref{thm:lowtempupper}]
  Note that
  \begin{equation*}
    \min_{\param \in \sphere} \err_{\truth}(\param)
    = \err_{\truth}(\truth)
    .
  \end{equation*}
  Therefore, we can combine \Cref{lem:erm} (with $\varepsilon = \epsilon/(2\pi)$) and \Cref{lem:excesserr} with the bound on $\opt = \err_{\truth}(\truth)$ from \Cref{lem:opt} to obtain the desired bound on $\norm{\erm - \truth}$.
\end{proof}

\subsection{Adaptivity}

If the inverse temperature $\invtemp$ is unknown, then we need to determine which of the aforementioned estimators to use in a data-driven fashion.
Notice that the estimators $\linest$, $\reluest$, and $\erm$ may be computed without explicit knowledge of $\invtemp$.
Therefore, it suffices to (coarsely) distinguish between the high ($\invtemp \lesssim 1$) and moderate-or-low ($\invtemp \gtrsim 1$) temperature regimes.
This can be done using an estimate of $\err_{\truth}(\truth)$ (e.g., training error rate of $\erm$) and reasoning about its relationship to $\invtemp$.

\section{Discussion}

Our characterization of the sample complexity of estimation in logistic regression delineates the high, moderate, and low temperature regimes.
However, we are only aware of computationally efficient estimators that achieve the (near) optimal sample complexities at high and zero temperatures: e.g., the linear estimator of \citet{plan2012robust} for $\invtemp \lesssim 1$, and the estimator based on solving a linear feasibility program (or the algorithm of~\citet{balcan2013active}) for $\invtemp = \infty$. Note that, although the ReLU loss is convex, we need to minimize it over the sphere.
It would be interesting to determine if the MLE itself (i.e., minimizing the logistic loss), or its efficient approximations, can shown to achieve optimal sample complexity. %

\section*{Acknowledgements}

We are most grateful to Akshay Krishnamurthy for helpful discussions about non-asymptotic analysis of MLE, and to the anonymous COLT reviewers for their constructive feedback and suggestions.
Part of this work was done while DH was visiting the Halıcıoğlu Data Science Institute at UC San Diego in Spring 2023.
We acknowledge the support of the National Science Foundation under grants IIS 2040971 and CCF 2217058/2133484.

\bibliographystyle{plainnat}
\bibliography{refs}

\newpage
\appendix

\section{Bounds on Gaussian integrals}
\label{sec:integrals}

\begin{lemma}[\citealp{feller1968introduction}, page 175]
  \label[lemma]{lem:normaltail}
  Let $\bfz \sim \Normal(0,1)$.
  For any $t>0$,
  \begin{equation*}
    \del*{ 1 - \frac1{t^2} } \frac1{t\sqrt{2\pi}} \exp(-t^2/2)
    \leq
    \Pr(\bfz \geq t)
    \leq \frac1{t\sqrt{2\pi}} \exp(-t^2/2)
    .
  \end{equation*}
\end{lemma}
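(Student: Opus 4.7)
The plan is to reduce both bounds to manipulations of the integral $\Pr(\bfz \geq t) = \frac{1}{\sqrt{2\pi}} \int_t^\infty e^{-x^2/2} \dif x$, exploiting the fact that $x \geq t > 0$ on the domain of integration. This is the classical Mills-ratio argument.

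For the upper bound, I would use the trivial inequality $1 \leq x/t$ valid for $x \geq t$, yielding
\begin{equation*}
  \int_t^\infty e^{-x^2/2} \dif x
  \;\leq\; \int_t^\infty \frac{x}{t}\, e^{-x^2/2} \dif x
  \;=\; \frac{1}{t}\, e^{-t^2/2},
\end{equation*}
since $-e^{-x^2/2}$ is an antiderivative of $x e^{-x^2/2}$. Dividing by $\sqrt{2\pi}$ produces the right-hand inequality of the lemma.

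For the lower bound, the natural approach is integration by parts with $u = 1/x$ and $\dif v = x e^{-x^2/2}\,\dif x$ (so $\dif u = -\dif x/x^2$ and $v = -e^{-x^2/2}$), giving
\begin{equation*}
  \int_t^\infty e^{-x^2/2} \dif x
  \;=\; \frac{1}{t}\, e^{-t^2/2} \;-\; \int_t^\infty \frac{1}{x^2}\, e^{-x^2/2} \dif x.
\end{equation*}
The residual integral is positive (which already gives the weaker bound $\frac{1}{t}e^{-t^2/2}/\sqrt{2\pi}$), so I need to control it from above. Here I would reuse the same trick: on $x \geq t$ we have $1/x^2 \leq x/t^3$, so
\begin{equation*}
  \int_t^\infty \frac{1}{x^2}\, e^{-x^2/2} \dif x
  \;\leq\; \frac{1}{t^3} \int_t^\infty x\, e^{-x^2/2} \dif x
  \;=\; \frac{1}{t^3}\, e^{-t^2/2}.
\end{equation*}
Substituting back yields $\int_t^\infty e^{-x^2/2}\,\dif x \geq (1/t - 1/t^3)\,e^{-t^2/2} = (1 - 1/t^2)\,\frac{1}{t}\,e^{-t^2/2}$, and dividing by $\sqrt{2\pi}$ gives the left-hand inequality.

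There is no serious obstacle: the proof is entirely a calculus exercise, and matches Feller's original presentation almost verbatim. The only conceptual step is recognizing that the same substitution $x/t \geq 1$ serves both to upper bound $\int e^{-x^2/2}$ by $\frac{1}{t}e^{-t^2/2}$ and (after one integration by parts) to upper bound the remainder $\int x^{-2} e^{-x^2/2}$ by $\frac{1}{t^3}e^{-t^2/2}$. No external lemmas from the paper are required.
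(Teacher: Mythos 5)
Your proof is correct. The paper itself gives no proof of this lemma---it is simply cited from Feller---so there is nothing to diverge from; your integration-by-parts argument (upper-bounding $1$ by $x/t$ and then $1/x^2$ by $x/t^3$ on the domain of integration) is the standard Mills-ratio derivation and every step checks out. For the record, Feller's own presentation reaches the same two bounds slightly differently, by differentiating the candidate bounding functions $\frac{1}{t}e^{-t^2/2}$ and $\bigl(\frac{1}{t}-\frac{1}{t^3}\bigr)e^{-t^2/2}$ and observing that the derivatives bracket $-e^{-t^2/2}$, then integrating from $t$ to $\infty$; the two arguments are interchangeable one-line calculus exercises.
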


Recall that $g(\eta) = \ln(1+\exp(\eta))$.

\begin{lemma}
  \label[lemma]{lem:g''}
  Let $\bfz \sim \Normal(0,1)$.
  For any $\invtemp>0$,
  \begin{equation}
    \label{eq:g''1}
    \frac14 \E\sbr{ \exp\del{-\invtemp\abs{\bfz}} }
    \leq \E\sbr{ g''(\invtemp \bfz) }
    \leq \min\set*{
      \frac12
      , \
      \E\sbr{ \exp\del{-\invtemp\abs{\bfz}} }
    }
  \end{equation}
  and
  \begin{equation}
    \label{eq:g''2}
    \frac1{\invtemp} \del*{ 1 - \frac1{\invtemp^2} } \sqrt{\frac2\pi}
    \leq \E\sbr{ \exp\del{-\invtemp\abs{\bfz}} }
    \leq \frac1\invtemp \sqrt{\frac2\pi}
    .
  \end{equation}
\end{lemma}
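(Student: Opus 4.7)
The plan is to establish \cref{eq:g''1} by a pointwise bound on $g''$ combined with the trivial maximum, and to establish \cref{eq:g''2} by an explicit Gaussian moment-generating-function style computation followed by an application of \Cref{lem:normaltail}.

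For \cref{eq:g''1}, I would first note the identity
\begin{equation*}
  g''(\eta) = \frac{e^{-\eta}}{(1+e^{-\eta})^2} = \frac{e^{-\abs{\eta}}}{(1+e^{-\abs{\eta}})^2},
\end{equation*}
which is manifestly symmetric in $\eta$. Since $1 \leq 1+e^{-\abs{\eta}} \leq 2$, squaring gives $1 \leq (1+e^{-\abs{\eta}})^2 \leq 4$, so pointwise
\begin{equation*}
  \tfrac14 e^{-\abs{\eta}} \leq g''(\eta) \leq e^{-\abs{\eta}}.
\end{equation*}
In addition, $g''$ is maximized at $\eta = 0$ with value $1/4 \leq 1/2$. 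Taking expectations with $\eta = \invtemp\bfz$ yields \cref{eq:g''1} immediately.

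For \cref{eq:g''2}, I would compute the Laplace transform of $\abs{\bfz}$ directly. Using symmetry of the standard normal density and completing the square,
\begin{align*}
  \E\sbr{\exp(-\invtemp\abs{\bfz})}
  &= 2 \int_0^\infty \frac{1}{\sqrt{2\pi}} \exp\!\del{-\invtemp t - t^2/2}\,\dif t \\
  &= 2 \exp(\invtemp^2/2) \int_0^\infty \frac{1}{\sqrt{2\pi}} \exp\!\del{-(t+\invtemp)^2/2}\,\dif t \\
  &= 2 \exp(\invtemp^2/2)\, \Pr(\bfz \geq \invtemp).
\end{align*}
Applying the two-sided tail bound from \Cref{lem:normaltail} to $\Pr(\bfz \geq \invtemp)$ then cancels the factor $\exp(\invtemp^2/2)$ and leaves exactly the upper bound $\frac{1}{\invtemp}\sqrt{2/\pi}$ and the lower bound $\frac{1}{\invtemp}(1-\tfrac{1}{\invtemp^2})\sqrt{2/\pi}$.

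There is no real obstacle here: both parts reduce to elementary inequalities for the logistic second derivative and to the standard identity $\E[\exp(-\invtemp\abs{\bfz})] = 2\exp(\invtemp^2/2)\Pr(\bfz \geq \invtemp)$. The only thing to take care of is that the Feller lower bound in \Cref{lem:normaltail} becomes vacuous (or negative) when $\invtemp \leq 1$, but in that regime the claimed lower bound in \cref{eq:g''2} is likewise non-positive, so the inequality holds trivially.
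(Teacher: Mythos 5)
Your proposal is correct and follows essentially the same route as the paper: pointwise sandwich bounds on $g''(\eta)$ in terms of $e^{-\abs{\eta}}$ (your form $e^{-\abs{\eta}}/(1+e^{-\abs{\eta}})^2$ is algebraically identical to the paper's $1/((1+e^{\abs{\eta}})(1+e^{-\abs{\eta}}))$), followed by the same completing-the-square identity $\E[\exp(-\invtemp\abs{\bfz})]=2\exp(\invtemp^2/2)\Pr(\bfz\geq\invtemp)$ and \Cref{lem:normaltail}. Your remark that the lower bound in \cref{eq:g''2} is vacuous for $\invtemp\leq 1$ is a correct and harmless observation.
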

\begin{proof}
  First, observe that
  \begin{equation*}
    g''(\eta)
    = \frac1{(1+\exp(\eta))(1+\exp(-\eta))}
    = \frac1{(1+\exp(\abs{\eta}))(1+\exp(-\abs{\eta}))}
    \leq \min\set*{
      \frac12 , \
      \exp(-\abs{\eta})
    } ,
  \end{equation*}
  while
  \begin{equation*}
    \frac{\exp(\abs{\eta})}{(1+\exp(\eta))(1+\exp(-\eta))}
    = \del*{ \frac1{1+\exp(-\abs{\eta})} }^2
    \geq \frac14 .
  \end{equation*}
  Plugging in $\eta = \invtemp\bfz$ and taking expectations gives the upper- and lower-bounds on $\E[g''(\invtemp\bfz)]$.
  For the upper- and lower-bounds on $\E[\exp(-\invtemp\abs{\bfz})]$, we have
  \begin{align*}
    \E\sbr{ \exp\del{-\invtemp\abs{\bfz}} }
    & = 2\int_0^\infty \frac1{\sqrt{2\pi}} \exp\del{ -\invtemp z - z^2/2} \dif z
    \\
    & = 2\exp\del{\invtemp^2/2}
    \int_0^\infty \frac1{\sqrt{2\pi}} \exp\del{ -(z+\invtemp)^2/2} \dif z
    \\
    & = 2\exp\del{\invtemp^2/2}
    \Pr(\bfz \geq \invtemp) ,
  \end{align*}
  and therefore the conclusion follows by applying \Cref{lem:normaltail}.
\end{proof}

\begin{lemma}
  \label[lemma]{lem:g'abs}
  Let $\bfz \sim \Normal(0,1)$.
  For any $\invtemp>0$,
  \begin{equation*}
    \E\sbr{ g'(-\invtemp\abs{\bfz}) }
    \leq
    \min\set*{
      \frac12
      , \
      \frac12 - \frac{\invtemp}4\sqrt{\frac2\pi} \del*{ 1 - \frac{\invtemp^2}6 }
      , \
      \frac1\invtemp \sqrt{\frac2\pi}
    }
    .
  \end{equation*}
\end{lemma}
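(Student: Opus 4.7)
The plan is to establish the three upper bounds separately, exploiting the identity $g'(-x) = \tfrac12 - \tfrac12\tanh(x/2)$ valid for all $x \geq 0$, which follows from $g'(-x) = 1/(1+e^x) = \tfrac12(1 - (e^x-1)/(e^x+1))$.

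The first bound, $\E[g'(-\invtemp|\bfz|)] \leq 1/2$, is immediate because $g'$ is increasing and $|\bfz| \geq 0$, so $g'(-\invtemp|\bfz|) \leq g'(0) = 1/2$ pointwise.

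For the second bound, I would use the above identity to rewrite
\begin{equation*}
  \E\sbr{g'(-\invtemp|\bfz|)} = \frac12 - \frac12 \E\sbr{\tanh(\invtemp|\bfz|/2)},
\end{equation*}
so the task is to \emph{lower}-bound $\E[\tanh(\invtemp|\bfz|/2)]$. The natural tool is the inequality $\tanh(x) \geq x - x^3/3$ for $x \geq 0$, which holds because the derivative of $\tanh(x) - x + x^3/3$ is $x^2 - \tanh^2(x) \geq 0$ (using $|\tanh x| \leq |x|$) and both sides vanish at $x=0$. Applying this with $x = \invtemp|\bfz|/2$ and taking expectations gives
\begin{equation*}
  \E\sbr{\tanh(\invtemp|\bfz|/2)} \geq \frac{\invtemp}{2}\E|\bfz| - \frac{\invtemp^3}{24}\E|\bfz|^3 = \frac{\invtemp}{2}\sqrt{\frac2\pi}\del*{1 - \frac{\invtemp^2}{6}},
\end{equation*}
since $\E|\bfz| = \sqrt{2/\pi}$ and $\E|\bfz|^3 = 2\sqrt{2/\pi}$. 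Substituting yields the claimed bound.

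For the third bound, I would use the elementary inequality $g'(-x) = 1/(1+e^x) \leq e^{-x}$ for $x \geq 0$, so that $g'(-\invtemp|\bfz|) \leq \exp(-\invtemp|\bfz|)$ pointwise. Taking expectations and invoking the upper bound of \Cref{eq:g''2} in \Cref{lem:g''} gives $\E[\exp(-\invtemp|\bfz|)] \leq (1/\invtemp)\sqrt{2/\pi}$, which is exactly the third quantity inside the minimum. There is no real obstacle here; the main conceptual step is noticing the $\tanh$ reformulation in the second bound, after which everything reduces to well-known Gaussian moments $\E|\bfz|$, $\E|\bfz|^3$, and the Gaussian integral already handled in \Cref{lem:g''}.
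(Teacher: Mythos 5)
Your proof is correct and follows essentially the same route as the paper's: the first bound from $g'(-x)\le g'(0)=\tfrac12$, the third from $g'(-x)\le e^{-x}$ together with \Cref{lem:g''}, and the second from the pointwise cubic bound $g'(-x)\le \tfrac12-\tfrac{x}{4}+\tfrac{x^3}{48}$ combined with $\E\sbr{\abs{\bfz}}=\sqrt{2/\pi}$ and $\E\sbr{\abs{\bfz}^3}=2\sqrt{2/\pi}$. The only difference is in how that cubic bound is obtained: the paper uses Taylor's theorem with the remainder estimate $\abs{g''''}\le 1/8$, whereas you derive the identical inequality via $g'(-x)=\tfrac12-\tfrac12\tanh(x/2)$ and $\tanh(y)\ge y-y^3/3$, a valid and arguably cleaner alternative.
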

\begin{proof}
  First, observe that $g'(\eta) \leq 1/2$ for all $\eta \leq 0$.
  Furthermore, by Taylor's theorem, for any $\eta\in\R$, there exists $h \in \R$ (between $0$ and $\eta$) such that
  \begin{align*}
    g'(\eta)
    & = g'(0) + g''(\eta) \eta + \frac12 g'''(\eta) \eta^2 + \frac16 g''''(h) \eta^3 \\
    & \leq g'(0) + g''(0) \eta + \frac12 g'''(0) \eta^2 + \frac1{48} \abs{\eta}^3 \\
    & = \frac12 + \frac14 \eta + \frac1{48} \abs{\eta}^3
    ,
  \end{align*}
  where the inequality uses the fact that $\abs{g''''(h)} \leq 1/8$ for all $h \in \R$.
  Therefore
  \begin{align*}
    \E\sbr{ g'(-\invtemp\abs{\bfz}) }
    & \leq \frac12 - \frac{\invtemp}4 \E\sbr{ \abs{\bfz} } + \frac{\invtemp^3}{48} \E\sbr{ \abs{\bfz}^3 } \\
    & = \frac12 - \frac{\invtemp}{4} \sqrt{\frac2\pi} + \frac{\invtemp^3}{24} \sqrt{\frac2\pi} \\
    & = \frac12 - \frac{\invtemp}{4} \sqrt{\frac2\pi} \del*{ 1 - \frac{\invtemp^2}{6} } .
  \end{align*}
  Finally, we also have
  \begin{equation*}
    \E\sbr{ g'(-\invtemp\abs{\bfz}) }
    = \E\sbr*{ \frac1{1+\exp\del{\invtemp\abs{\bfz}}} }
    \leq \E\sbr{ \exp\del{-\invtemp\abs{\bfz}} }
    \leq \frac1\invtemp \sqrt{\frac2\pi}
    ,
  \end{equation*}
  where the final inequality follows by \Cref{lem:g''}.
\end{proof}

\begin{lemma}
  \label[lemma]{lem:expqmoment}
  Let $\bfz \sim \Normal(0,1)$.
  For any $\invtemp>0$ and any non-negative integer $q$,
  \begin{equation*}
    \E\sbr{ \exp\del{-\invtemp\abs{\bfz}} \abs{\bfz}^q }
    \leq \frac{q!}{\invtemp^q} .
  \end{equation*}
\end{lemma}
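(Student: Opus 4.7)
}

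The plan is to obtain a \emph{pointwise} bound on the integrand $\exp(-\invtemp|z|)|z|^q$ and then take expectations. The key observation is that for any $\lambda \geq 0$, the power series expansion $e^{\lambda} = \sum_{k=0}^{\infty} \lambda^k/k!$ has non-negative terms, so $e^{\lambda} \geq \lambda^q/q!$, i.e.\ $\lambda^q e^{-\lambda} \leq q!$. Equivalently, one can read this off from the fact that $\lambda^q e^{-\lambda}/q!$ is a Poisson probability mass and therefore at most $1$.

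Substituting $\lambda = \invtemp \abs{z}$ (which is non-negative since $\invtemp > 0$) gives
\begin{equation*}
  \invtemp^{q} \abs{z}^{q} \exp\del{-\invtemp\abs{z}} \leq q!
\end{equation*}
for every $z \in \R$. Dividing by $\invtemp^{q}$ and taking expectations with respect to $\bfz \sim \Normal(0,1)$ yields the claimed inequality
\begin{equation*}
  \E\sbr{\exp(-\invtemp\abs{\bfz})\abs{\bfz}^{q}} \leq \frac{q!}{\invtemp^{q}} .
\end{equation*}

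There is essentially no technical obstacle here: the bound is completely elementary once the pointwise inequality $\lambda^q e^{-\lambda} \leq q!$ is noticed, and the Gaussian density plays no role in the proof beyond normalizing the expectation. (If one preferred not to invoke the power series, an alternative route is to set $I_q := \E[\exp(-\invtemp\abs{\bfz})\abs{\bfz}^q]$, write $I_q = 2\int_0^\infty z^q e^{-\invtemp z}\phi(z)\,\dif z$, and use $z\phi(z) = -\phi'(z)$ together with integration by parts to derive the recursion $I_q = (q-1)I_{q-2} - \invtemp I_{q-1}$ for $q \geq 2$; non-negativity of $I_q$ then yields $I_q \leq (q/\invtemp) I_{q-1}$, which iterates to $I_q \leq (q!/\invtemp^q)\,I_0 \leq q!/\invtemp^q$ using $I_0 \leq 1$. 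The pointwise proof is cleaner and avoids the case analysis at $q=1$.)
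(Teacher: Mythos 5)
Your proof is correct and follows essentially the same route as the paper's: both bound the integrand $e^{-\invtemp\abs{z}}\abs{z}^q$ pointwise by $q!/\invtemp^q$ and then take expectations (the Gaussian density indeed plays no role beyond normalization). The only difference is cosmetic --- you obtain the pointwise bound from the series inequality $e^{\lambda} \geq \lambda^q/q!$, whereas the paper maximizes $-\invtemp x + q\ln x$ at $x = q/\invtemp$ and then invokes $q^q e^{-q} \leq q!$; your derivation is arguably the cleaner of the two.
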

We remark that the bound in \Cref{lem:expqmoment} can be improved to $O(q!/\invtemp^{q+1})$, which is tight up to constants.
This is because $\E\sbr{ \exp(-\invtemp\abs{\bfz}) \abs{\bfz}^q } = \sqrt{2/\pi} \mu_q m_{\invtemp}$, where $\mu_q$ is the $q$-th uncentered moment of the $\intco{0,\infty}$-truncated $\Normal(-\invtemp,1)$ distribution, and $m_{\invtemp} = (1-\Phi(\invtemp))/\phi(\invtemp)$ is the Mills ratio for $\Normal(0,1)$ at $\invtemp$.
However, we do not need this improved bound in the present work.
\begin{proof}[Proof of \Cref{lem:expqmoment}]
  The function $f \colon \R_+ \to \R$ defined by $f(x) = - \invtemp x + q \log(x)$ is maximized at $x = q/\invtemp$.
  Therefore
  \begin{align*}
    \E\sbr{ \exp\del{-\invtemp\abs{\bfz}} \abs{\bfz}^q }
    & = \E\sbr{ \exp\del{-\invtemp\abs{\bfz} + q \ln \abs{\bfz} } } \\
    & \leq \exp\del{-\invtemp (q/\invtemp) + q \ln(q/\invtemp) } \\
    & = \exp\del{-q + q \ln(q) - q\ln(\invtemp) } \\
    & \leq \exp\del{\ln(q!) - q\ln(\invtemp)} \\
    & = \frac{q!}{\invtemp^q} .
    \qedhere
  \end{align*}
\end{proof}

Recall that the Bernoulli distributions form an exponential family $\set{ p_\eta : \eta \in \R }$, where
\begin{equation*}
  p_\eta(y) = \exp\del*{ \eta \ind{y=1} - g(\eta) }
\end{equation*}
and $g(\eta) = \ln(1+\exp(\eta))$ is the log partition function for $p_\eta$.
The mean parameter---i.e., the mean of $\ind{y=1}$ under $p_\eta$---is given by $g'(\eta)$.
In the proof of the following \namecref{lem:klbound}, we use the fact that the KL divergence $\KL(p_\eta \| p_{\eta'})$ can be expressed as a Bregman divergence associated with $g$:
\begin{equation*}
  \KL(p_\eta \| p_{\eta'}) = g(\eta') - \del*{ g(\eta) + g'(\eta) (\eta' - \eta) } .
\end{equation*}

\begin{lemma}
  \label[lemma]{lem:klbound}
  Let $\bfz$ and $\bfz'$ be $\Normal(0,1)$ random variables with correlation $\rho$.
  Then
  \begin{equation*}
    \E\sbr*{
      \KL(\Bernoulli(g'(\invtemp\bfz)) \| \Bernoulli(g'(\invtemp\bfz')))
    }
    \leq 
    \frac{\invtemp}{2} (1-\rho) \min\set*{ \invtemp, 2\sqrt{2/\pi} }
  \end{equation*}
  and
  \begin{equation*}
    \E\sbr*{
      \KL(\Bernoulli(g'(\invtemp\bfz)) \| \Bernoulli(g'(\invtemp\bfz')))
    }
    \geq 
    \frac{\invtemp}{4} (1-\rho) \del*{ 1 - \frac1{\invtemp^2} } \sqrt{\frac2\pi}
    .
  \end{equation*}
\end{lemma}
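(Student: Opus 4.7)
The plan is to apply the Bregman-divergence form of the KL divergence between two Bernoullis and then use Gaussian integration by parts (Stein's lemma) to collapse the two-variable expected KL into the single-variable expectation $\E[g''(\invtemp\bfz)]$, which \Cref{lem:g''} already estimates both above and below.

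First I would expand, using the Bregman identity recorded just before the lemma statement,
$$\KL(\Bernoulli(g'(\invtemp\bfz)) \,\|\, \Bernoulli(g'(\invtemp\bfz'))) = g(\invtemp\bfz') - g(\invtemp\bfz) - \invtemp\, g'(\invtemp\bfz)(\bfz' - \bfz).$$
Because $\bfz$ and $\bfz'$ share the same marginal standard normal law, the first two terms have equal expectation and cancel, leaving
$$\E\sbr{\KL(\Bernoulli(g'(\invtemp\bfz)) \,\|\, \Bernoulli(g'(\invtemp\bfz')))} = -\invtemp\bigl(\E[g'(\invtemp\bfz)\,\bfz'] - \E[g'(\invtemp\bfz)\,\bfz]\bigr).$$

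Next, since $(\bfz,\bfz')$ is jointly Gaussian with $\E[\bfz' \mid \bfz] = \rho\bfz$, conditioning on $\bfz$ gives $\E[g'(\invtemp\bfz)\bfz'] = \rho\,\E[g'(\invtemp\bfz)\bfz]$, and one application of Stein's lemma gives $\E[g'(\invtemp\bfz)\bfz] = \invtemp\,\E[g''(\invtemp\bfz)]$. Combining these yields the clean identity
$$\E\sbr{\KL(\Bernoulli(g'(\invtemp\bfz)) \,\|\, \Bernoulli(g'(\invtemp\bfz')))} = \invtemp^2 (1-\rho)\,\E[g''(\invtemp\bfz)].$$
Both stated bounds then follow immediately by substituting the two-sided estimates from \Cref{lem:g''}: for the upper bound use $\E[g''(\invtemp\bfz)] \leq \min\set{1/2,\,\E[\exp(-\invtemp|\bfz|)]} \leq \min\set{1/2,\,\invtemp^{-1}\sqrt{2/\pi}}$, and for the lower bound use $\E[g''(\invtemp\bfz)] \geq \tfrac{1}{4}\E[\exp(-\invtemp|\bfz|)] \geq \tfrac{1}{4\invtemp}(1 - \invtemp^{-2})\sqrt{2/\pi}$.

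The step that makes the inequality sharp in both the small-$\invtemp$ and large-$\invtemp$ regimes, and is therefore the main thing to get right, is the Stein reduction to $\E[g''(\invtemp\bfz)]$. A pointwise bound $g'' \leq 1/4$, or a uniform Taylor remainder inside the Bregman integral, would only recover the small-$\invtemp$ (linear-in-$\invtemp$) branch of the minimum; once $\bfz'$ has been integrated out via Stein, the remaining single-variable expectation automatically exposes the exponential decay in $\invtemp$ that produces the $2\sqrt{2/\pi}$ branch.
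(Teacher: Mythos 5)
Your proposal is correct and follows essentially the same route as the paper's proof: the Bregman form of the Bernoulli KL, cancellation of the $g$ terms by equality of marginals, reduction of $\E[g'(\invtemp\bfz)\bfz']$ to $\rho\,\E[g'(\invtemp\bfz)\bfz]$ (the paper does this via the explicit decomposition $\bfz' = \rho\bfz + \sqrt{1-\rho^2}\,\bfz_\perp$, which is equivalent to your conditioning step), Stein's identity to reach $\invtemp^2(1-\rho)\E[g''(\invtemp\bfz)]$, and then \Cref{lem:g''}. Your closing remark about why a pointwise bound on $g''$ would lose the large-$\invtemp$ branch correctly identifies the key point of the argument.
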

\begin{proof}
  For any $\eta, \eta' \in \R$, we have
  \begin{equation*}
    \KL(\Bernoulli(g'(\eta)) \| \Bernoulli(g'(\eta')))
    = g(\eta') - g(\eta) - g'(\eta) \del{ \eta' - \eta }
    .
  \end{equation*}
  Hence,
  \begin{align*}
    \E\sbr{
      \KL(\Bernoulli(g'(\invtemp\bfz)) \| \Bernoulli(g'(\invtemp\bfz')))
    }
    & = \E\sbr{
      g(\invtemp \bfz')
      - g(\invtemp \bfz)
      - g'(\invtemp \bfz) \invtemp (\bfz' - \bfz)
    }
    \\
    & = \E\sbr{ g'(\invtemp \bfz) \invtemp \del{\bfz - \bfz'} }
    .
  \end{align*}
  Since $\bfz$ and $\bfz'$ have correlation $\rho$, we may write
  \begin{equation*}
    \bfz' = \rho\bfz + \sqrt{1-\rho^2} \bfz_\perp ,
  \end{equation*}
  where $\bfz_\perp \sim \Normal(0,1)$ is independent of $\bfz$.
  Thus
  \begin{align*}
    \E\sbr{ g'(\invtemp \bfz) \invtemp \del{\bfz - \bfz'} }
    & = \E\sbr*{ g'(\invtemp \bfz) \invtemp \del*{ \del{1-\rho} \bfz - \sqrt{1-\rho^2}\bfz_\perp } }
    \\
    & = \invtemp \del{1-\rho} \E\sbr{ g'(\invtemp \bfz) \bfz }
    \\
    & = \invtemp^2 \del{1-\rho}
    \E\sbr{ g''(\invtemp \bfz) }
    ,
  \end{align*}
  where the final step follows by Stein's identity.
  Now apply \Cref{lem:g''} to obtain the conclusion.
\end{proof}

\section{Proof of Lemma~\ref{lem:relumoments}}
\label{sec:relumomentsproof}

The following \namecref{lem:kvdgmoments} is implicit in the proofs of Lemma 5.2.1 and Lemma A.2.1 of~\citet{kuchelmeister2023finite}.

\begin{lemma}[\citealp{kuchelmeister2023finite}]
  \label[lemma]{lem:kvdgmoments}
  Fix $\truth, \param \in \sphere$ and $p \colon \R \to [0,1]$ satisfying $p(-t) = 1-p(t)$ for all $t \in \R$.
  Let $\bfx$ be a standard normal random vector in $\R^d$; let the conditional distribution of $\bfy$ given $\bfx$ be $\Bernoulli(p(\bfx^\T\truth))$; and define
  \begin{equation*}
    \bfdelta_{\param} := \sbr{ -\bfy \bfx^\T \param }_+ - \sbr{ -\bfy \bfx^\T \truth }_+ .
  \end{equation*}
  For any integer $q \geq 2$,
  \begin{multline*}
    \E \sbr*{ \abs{\bfdelta_{\param}}^q }
    \leq
    2^{q-1}
    \frac1{\pi\sqrt2} \Gamma\del*{ \frac{q+1}{2} } \del*{ \frac{\pi}{\sqrt2} \norm{\param - \truth} }^{q+1}
    + 2^{q-2} \norm{\param - \truth}^{2q} \E\sbr*{ p(-\abs{\bfz}) \abs{\bfz}^q }
    \\
    + 2^{2(q-1)} \norm{\param - \truth}^q \del*{ 1 - \frac14 \norm{\param - \truth}^2 }^{q/2} \frac{2^{q/2}}{\sqrt{\pi}} \Gamma\del*{ \frac{q+1}{2} } \E\sbr*{p(-\abs{\bfz})}
    .
  \end{multline*}
\end{lemma}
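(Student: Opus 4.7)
The plan is to reduce the computation to a 2D Gaussian integral exploiting the geometry of $\param - \truth$, and then case-split on the signs of the two ReLU arguments. First I would exploit the odd symmetry $p(-t) = 1 - p(t)$: since $(\bfx, \bfy)$ and $(-\bfx, -\bfy)$ have the same law and $\bfdelta_\param$ depends on them only through $\bfy\bfx$, one can integrate against the reweighted density $2\phi(x) p(x^\T\truth)$ with $\bfy$ fixed to $+1$. Next I would parametrize $\param = \rho\truth + \sigma\bfu$ with $\bfu$ orthogonal to $\truth$, $\rho = 1 - r^2/2$, $\sigma = r\sqrt{1 - r^2/4}$, and $r := \norm{\param - \truth}$. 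Setting $\bfz := \bfx^\T\truth$ and $\bfw := \bfx^\T\bfu$ (independent standard Gaussians),
\begin{equation*}
\bfdelta_\param = \sbr{-\rho\bfz - \sigma\bfw}_+ - \sbr{-\bfz}_+
\end{equation*}
depends only on $(\bfz, \bfw)$, and the remaining orthogonal components of $\bfx$ can be integrated out trivially.

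Then I would split the $(\bfz, \bfw)$-plane into four regions by the signs of $\bfz$ and $\rho\bfz + \sigma\bfw$. The region where both are non-negative contributes $0$. The ``both wrong'' region, where both are negative, gives $|\bfdelta_\param| = |(1-\rho)\bfz - \sigma\bfw|$; crucially the restriction $\bfz < 0$ turns the weight $2p(\bfz)$ into $2p(-|\bfz|)$, supplying the small factor that produces the $p(-|\bfz|)$ moments in the lemma. Applying $|a - b|^q \leq 2^{q-1}(|a|^q + |b|^q)$, using $(1 - \rho)^q = r^{2q}/2^q$, and exploiting the independence of $\bfw$ from $\bfz$ together with the symmetry $\bfz \to -\bfz$, one obtains contributions of the form $r^{2q} \E[p(-|\bfz|)|\bfz|^q]$ and $\sigma^q \E[|\bfw|^q] \E[p(-|\bfz|)]$, matching terms 2 and 3 (with $\E[|\bfw|^q] = 2^{q/2}\Gamma((q+1)/2)/\sqrt{\pi}$ supplying the $\Gamma$-factor in term 3).

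The two remaining regions together form the ``wedge'' where $\bfx^\T\truth$ and $\bfx^\T\param$ have opposite signs. On the wedge, a short case-by-case check shows $|\bfdelta_\param| \leq |\bfx^\T(\param - \truth)|$ (in each case one ReLU vanishes while the other is bounded by the full sign gap), and we use $2p(\bfz) \leq 2$. To integrate, I would rotate coordinates so that one axis aligns with $(\param - \truth)/r$: writing $\bfg, \bfh$ for the corresponding independent standard Gaussians, $\bfx^\T(\param - \truth) = r\bfg$, and the wedge becomes $\set{|\bfh| \leq r|\bfg|/(2\sqrt{1 - r^2/4})}$. Conditioning on $\bfg$, the wedge probability is $2\Phi(r|\bfg|/(2\sqrt{1-r^2/4})) - 1 \leq r|\bfg|/\sqrt{2\pi(1-r^2/4)}$, yielding
\begin{equation*}
\E\sbr{|\bfdelta_\param|^q \mathbf{1}_{\text{wedge}}} \lesssim \frac{r^{q+1}}{\sqrt{1 - r^2/4}} \, \E\sbr{|\bfg|^{q+1}} ,
\end{equation*}
which is of the form required by term 1.

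The main obstacle will be matching the stated $\Gamma((q+1)/2)$-factor of term 1 exactly: the route above naturally produces a $\Gamma((q+2)/2) = \Gamma(q/2+1)$-factor from $\E[|\bfg|^{q+1}]$, but the generous prefactor $2^{q-1}(\pi/\sqrt{2})^{q+1}/(\pi\sqrt 2)$ in the lemma leaves ample room to absorb this discrepancy. An alternative, perhaps closer to the shape of the stated constants, is to integrate the wedge directly in polar coordinates on the $(\bfz, \bfw)$-plane, where the wedge angular measure $2\arccos\rho \leq \pi r$ and the radial Gaussian moment separate cleanly; this is essentially how Kuchelmeister organises the computation in the probit setting, and it transplants verbatim since the only $p$-dependence enters through the case B analysis.
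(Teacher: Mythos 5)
Your proposal is correct in substance and arrives at the same two-term structure as the paper, but by a noticeably different route, so a comparison is worthwhile. The paper's proof starts from the algebraic identity $\sbr{x}_+ = (\abs{x}+x)/2$, which rewrites $\bfdelta_{\param}$ exactly as a ``wedge'' piece supported on $\set{\sign(\bfx^\T\param)\neq\sign(\bfx^\T\truth)}$ plus a ``label-disagreement'' piece supported on $\set{\bfy\neq\sign(\bfx^\T\truth)}$; it then applies the $2^{q-1}(\abs{a}^q+\abs{b}^q)$ inequality, imports Corollary A.2.1 of \citet{kuchelmeister2023finite} for the wedge moment of $\abs{\bfx^\T\param}^q$, and handles the second piece by conditioning on $\bfx$ (producing the $p(-\abs{\bfz})$ weight) and splitting $\bfx^\T(\param-\truth)=(\rho-1)\bfz+\sqrt{1-\rho^2}\bfz_\perp$. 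You instead symmetrize $(\bfx,\bfy)\mapsto(-\bfx,-\bfy)$ to fix $\bfy=+1$ under the reweighted density $2p(\bfz)$, reduce to the two-dimensional $(\bfz,\bfw)$ marginal, and do an exact four-way sign case analysis: your region-B identity $\bfdelta_{\param}=-\bfx^\T(\param-\truth)$ and wedge bound $\abs{\bfdelta_{\param}}\leq\abs{\bfx^\T(\param-\truth)}$ both check out, and your route recovers terms 2 and 3 with slightly \emph{better} constants (you only pay the Jensen factor once) while being self-contained rather than citing the external corollary. The one concrete caveat is your first treatment of the wedge: the linearization $2\Phi(c\abs{\bfg})-1\leq\sqrt{2/\pi}\,c\abs{\bfg}$ with $c=r/(2\sqrt{1-r^2/4})$ produces a factor $(1-r^2/4)^{-1/2}$ that blows up as $r=\norm{\param-\truth}\to 2$, whereas the lemma's first term stays finite there; so for $\param$ near $-\truth$ that variant does not dominate the stated bound, and the generous $(\pi/\sqrt2)^{q+1}$ prefactor cannot absorb an unbounded quantity. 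This is easily repaired either by capping the conditional wedge probability at $1$ or, better, by the polar-coordinate variant you yourself propose (angular measure $2\arccos\rho\leq\pi\norm{\param-\truth}$ times the radial moment $\E\sbr{R^q}=2^{q/2}\Gamma(q/2+1)$), which one can check is dominated by the lemma's first term uniformly over $r\in\intcc{0,2}$ and all $q\geq2$; with that choice your argument goes through completely. The residual mismatch between $\Gamma(q/2+1)$ and $\Gamma((q+1)/2)$ that you worry about is harmless for the same reason the paper's own proof tolerates it (the paper uses $\Gamma(q/2+1)/(q+1)\leq\Gamma((q+1)/2)$ and, in your direction, the $\pi^q$ slack covers the ratio $\Gamma(q/2+1)/\Gamma((q+1)/2)=O(\sqrt q)$).
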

\begin{proof}
  Using the identity $\sbr{x}_+ = (\abs{x}+x)/2$, we have for any $\param \in \sphere$,
  \begin{align*}
    \bfdelta_{\param}
    = \sbr{ -\bfy \bfx^\T \param }_+ - \sbr{ -\bfy \bfx^\T \truth }_+
    & = 
    \frac{\abs{\bfx^\T\param} - \bfy\bfx^\T \param}{2}
    - \frac{\sign(\bfx^\T\truth)\bfx^\T\truth - \bfy\bfx^\T \truth}{2} \\
    & = 
    \frac{(\sign(\bfx^\T\param) - \sign(\bfx^\T\truth))\bfx^\T \param}{2}
    - \frac{(\bfy-\sign(\bfx^\T\truth))\bfx^\T(\param-\truth)}{2} .
  \end{align*}
  Therefore, for any $q \geq 2$, Jensen's inequality implies
  \begin{align*}
    \abs{\bfdelta_{\param}}^q
    & \leq 2^{q-1} \del*{
      \abs*{ \frac{(\sign(\bfx^\T\param) - \sign(\bfx^\T\truth))\bfx^\T \param}{2} }^q
      +
      \abs*{ \frac{(\bfy-\sign(\bfx^\T\truth))\bfx^\T(\param-\truth)}{2} }^q
    }
    \\
    & = 2^{q-1} \ind{\sign(\bfx^\T\param) \neq \sign(\bfx^\T\truth)} \abs{\bfx^\T \param}^q
    + 2^{q-1} \ind{\bfy \neq \sign(\bfx^\T\truth)} \abs{\bfx^\T(\param-\truth)}^q
    .
  \end{align*}
  \citet[Corollary A.2.1]{kuchelmeister2023finite} showed that
  \begin{align*}
    \E\sbr*{
      \ind{\sign(\bfx^\T\param) \neq \sign(\bfx^\T\truth)} \abs{\bfx^\T \param}^q
    }
    & \leq \frac1{\pi\sqrt2} \frac{\Gamma\del*{ \frac{q}{2} + 1 }}{q+1} \del*{ \frac{\pi}{\sqrt2} \norm{\param - \truth} }^{q+1} \\
    & \leq \frac1{\pi\sqrt2} \Gamma\del*{ \frac{q+1}{2} } \del*{ \frac{\pi}{\sqrt2} \norm{\param - \truth} }^{q+1}
  \end{align*}
  where the latter inequality uses $\Gamma(q/2+1)/(q+1) \leq \Gamma((q+1)/2)$.

  Using the conditional distribution of $\bfy$ given $\bfx$,
  \begin{align*}
    \E\sbr*{ \ind{\bfy \neq \sign(\bfx^\T\truth)} \abs{\bfx^\T(\param-\truth)}^q }
    & = \E\sbr*{ p(-\abs{\bfx^\T\truth}) \abs{\bfx^\T(\param-\truth)}^q } \\
    & = \E\sbr*{ p(-\abs{\bfz}) \abs{(\rho-1)\bfz + \sqrt{1-\rho^2}\bfz_\perp}^q }
  \end{align*}
  where $\rho = \param^\T\truth$, and $\bfz$ and $\bfz_\perp$ are independent standard normal random variables.
  By Jensen's inequality,
  \begin{equation*}
    \abs{(\rho-1)\bfz + \sqrt{1-\rho^2}\bfz_\perp}^q
    \leq 2^{q-1} \abs{\rho-1}^q \abs{\bfz}^q + 2^{q-1} (1-\rho^2)^{q/2} \abs{\bfz_\perp}^q
    .
  \end{equation*}
  Moreover, we have
  \begin{equation*}
    1 - \rho = \frac12 \norm{\param - \truth}^2
    \quad \text{and} \quad
    1 - \rho^2 = \norm{\param - \truth}^2 \del*{ 1 - \frac14 \norm{\param - \truth}^2 }
    .
  \end{equation*}
  Therefore, using independence of $\bfz$ and $\bfz_\perp$,
  \begin{align*}
    \lefteqn{
      \E\sbr*{ \ind{\bfy \neq \sign(\bfx^\T\truth)} \abs{\bfx^\T(\param-\truth)}^q }
    } \\
    & \leq 2^{q-1} \abs{\rho-1}^q \E\sbr*{ p(-\abs{\bfz}) \abs{\bfz}^q }
    + 2^{q-1} (1-\rho^2)^{q/2} \E\sbr*{p(-\abs{\bfz})} \E\sbr*{ \abs{\bfz_\perp}^q }
    \\
    & = \frac12 \norm{\param - \truth}^{2q} \E\sbr*{ p(-\abs{\bfz}) \abs{\bfz}^q }
    + 2^{q-1} \norm{\param - \truth}^q \del*{ 1 - \frac14 \norm{\param - \truth}^2 }^{q/2} \frac{2^{q/2}}{\sqrt{\pi}} \Gamma\del*{ \frac{q+1}{2} } \E\sbr*{p(-\abs{\bfz})}
    .
    \qedhere
  \end{align*}
\end{proof}

\begin{proof}[Proof of \Cref{lem:relumoments}]
  We use \Cref{lem:kvdgmoments} with $p(t) = g'(\invtemp t)$.
  Therefore we need to bound
  $\E\sbr{ g'(-\invtemp\abs{\bfz}) }$
  and
  $\E\sbr{ g'(-\invtemp\abs{\bfz}) \abs{\bfz}^q }$ for all integers $q\geq2$.
  Since
  $g'(-\invtemp\abs{z}) \leq \exp(-\invtemp\abs{z})$ for all $z \in \R$, we can use
  \Cref{lem:g''} for the former and
  \Cref{lem:expqmoment} for the latter.
  We obtain
  \begin{align*}
    \E \sbr*{ \abs{\bfdelta_{\param}}^q }
    & \leq
    2^{q-1} \cdot \frac1{\pi\sqrt2} \Gamma\del*{ \frac{q+1}{2} } \del*{ \frac{\pi}{\sqrt2} \norm{\param - \truth} }^{q+1}
    \\
    & \qquad
    +
    2^{q-2} \cdot \norm{\param - \truth}^{2q} \cdot \frac{q!}{\invtemp^q}
    \\
    & \qquad
    + 2^{2(q-1)} \cdot \norm{\param - \truth}^q \del*{ 1 - \frac14 \norm{\param - \truth}^2 }^{q/2} \frac{2^{q/2}}{\sqrt{\pi}} \Gamma\del*{ \frac{q+1}{2} } \cdot \frac1{\invtemp} \sqrt{\frac2\pi}
    \\
    & \leq
    2^{q-1} \cdot \frac1{\pi\sqrt2} \Gamma\del*{ \frac{q+1}{2} } \del*{ \frac{\pi}{\sqrt2} \norm{\param - \truth} }^{q+1}
    +
    2^{2q-2} \cdot \norm{\param - \truth}^q \cdot \frac{q!}{\invtemp}
    \\
    & \qquad
    + 2^{2(q-1)} \cdot \norm{\param - \truth}^q \frac{2^{q/2}}{\sqrt{\pi}} \Gamma\del*{ \frac{q+1}{2} } \cdot \frac1{\invtemp} \sqrt{\frac2\pi}
    \\
    & \leq
    q! \cdot \del*{ C \norm{\param - \truth} }^{q-2}
    \cdot \frac{C}{2} \cdot \norm{\param - \truth}^2
    \cdot \del*{ \norm{\param - \truth} + \frac1{\invtemp} }
  \end{align*}
  for some absolute constant $C>0$, where the final inequality uses $\Gamma((q+1)/2) \leq q!$.
  The final right-hand side in the previous display is clearly bounded above by
  $q!v b^{q-2} / 2$ for the specified choices of $b$ and $v$.
\end{proof}

\section{Proof of \Cref{lem:excesserr}}
\label{sec:excesserrproof}

\begin{proof}[Proof of \Cref{lem:excesserr}]
  We have
  \begin{align*}
    \err_{\truth}(\param)
    & = \E\sbr*{ g'(-\invtemp \sign(\bfx^\T\param) \bfx^\T\truth) } \\
    & = \E\sbr*{ g'(-\invtemp \sign(\bfx^\T\param) \bfx^\T\truth) \ind{\param^\T\bfx\bfx^\T\truth \leq 0} } + \E\sbr*{ g'(-\invtemp \sign(\bfx^\T\param) \bfx^\T\truth) \ind{\param^\T\bfx\bfx^\T\truth > 0} } \\
    & = \E\sbr*{ (1-g'(-\invtemp \abs{\bfx^\T\truth})) \ind{\param^\T\bfx\bfx^\T\truth \leq 0} } + \E\sbr*{ g'(-\invtemp \abs{\bfx^\T\truth}) \ind{\param^\T\bfx\bfx^\T\truth > 0} } \\
    & = \Pr(\param^\T\bfx\bfx^\T\truth \leq 0) - 2\E\sbr*{ g'(-\invtemp \abs{\bfx^\T\truth}) \ind{\param^\T\bfx\bfx^\T\truth \leq 0} } + \E\sbr*{ g'(-\invtemp \abs{\bfx^\T\truth}) } \\
    & = \Pr(\param^\T\bfx\bfx^\T\truth \leq 0) - 2\E\sbr*{ g'(-\invtemp \abs{\bfx^\T\truth}) \ind{\param^\T\bfx\bfx^\T\truth \leq 0} } + \err_{\truth}(\truth) .
  \end{align*}
  Therefore
  \begin{align*}
    \Pr(\param^\T\bfx\bfx^\T\truth \leq 0)
    & = \err_{\truth}(\param) - \err_{\truth}(\truth) + 2\E\sbr*{ g'(-\invtemp \abs{\bfx^\T\truth}) \ind{\param^\T\bfx\bfx^\T\truth \leq 0} } \\
    & \leq \err_{\truth}(\param) - \err_{\truth}(\truth) + 2\E\sbr*{ g'(-\invtemp \abs{\bfx^\T\truth}) } .
  \end{align*}
  The claim now follows by \Cref{lem:g'abs} and the fact that
  \begin{equation*}
    \Pr(\param^\T\bfx\bfx^\T\truth \leq 0)
    = \frac{\arccos(\param^\T\truth)}{\pi}
    \geq \frac{\sqrt{2(1-\param^\T\truth)}}{\pi}
    = \frac{\norm{\param - \truth}}{\pi}
    .
    \qedhere
  \end{equation*}
\end{proof}

\end{document}